\documentclass[11pt,reqno]{article}
\usepackage[table]{xcolor}
\usepackage{amsmath}
\usepackage{amsthm}
\usepackage{amssymb}
\usepackage{epsfig}
\usepackage{epstopdf}
\usepackage{calc}
\usepackage{xspace}
\usepackage{comment}
\usepackage{paralist}
\usepackage{verbatim}
\usepackage{url}
\usepackage[normalem]{ulem}
\usepackage{setspace}
\usepackage{rotating}
\usepackage{amsmath,amssymb}
\usepackage{subfigure}
\usepackage{array,arydshln}	
\usepackage{multirow}
\usepackage{booktabs}
\usepackage{color,colortbl}
\usepackage{authblk}
\usepackage{hyperref}
\usepackage{enumitem}
\usepackage{tabularx}	
\usepackage{adjustbox}
\usepackage{float}
\usepackage{makecell}
\usepackage{bm}
\usepackage{mathrsfs}
\usepackage{indentfirst}
\usepackage{caption}
\graphicspath{ {./pictures of paper/} }

\usepackage{natbib}
\DeclareMathOperator*{\argmax}{argmax}

\newtheorem{theorem}{Theorem}
\newtheorem{prop}{Proposition}

\newtheorem{corollary}{Corollary}
\newtheorem{lemma}{Lemma}
\newtheorem{@remark}{\bf Remark}
\newenvironment{remark}{\begin{@remark}\itshape}{\end{@remark}}

\newcommand {\A} {\alpha}
\newcommand {\B} {\beta}
\newcommand {\W} {\omega}
\newcommand {\G} {\gamma}
\newcommand {\E} {\mathrm{E}}

\newcommand {\T} {\theta}

\newcommand {\ep} {\epsilon}

\newcommand{\tcr}{\textcolor{red}}

\newcommand{\tsig}{\tilde{\sigma}}

\newcommand{\pa}{\partial}
\newcommand {\paa}{\partial^2_{\T\T'}}

\newcommand{\mF}{\mathcal{F}}
\newcommand{\bX}{\mathbf{X}_n}

\title{ Robust Bayesian estimation in conditionally heteroscedastic time series
models}
\author{Jeongho Lee}
\affil{Department of Statistics, Kyungpook National University}
\author[1]{Junmo Song}

\begin{document}

\maketitle

\begin{abstract}
Outliers can seriously distort statistical inference by inducing excessive sensitivity in the likelihood function, thereby compromising the reliability of Bayesian estimation. To address this issue, we develop a robust Bayesian estimation method for conditionally heteroscedastic time series models by extending the density power divergence (DPD) framework to the Bayesian setting. The resulting DPD-based posterior distribution, controlled by a tuning parameter, achieves a smooth balance between efficiency and robustness. We establish the asymptotic properties of the proposed estimator; specifically, the DPD-based posterior is shown to satisfy a Bernstein–von Mises type theorem, converging to a normal distribution centered at the minimum DPD estimator (MDPDE). Furthermore, the corresponding Bayes estimator, defined as the posterior mean under the DPD-based posterior (EDPE), is asymptotically equivalent to the MDPDE. Monte Carlo simulations based on GARCH(1,1) models confirm that the proposed EDPE performs well under both uncontaminated and contaminated data, maintaining robustness where the ordinary Bayes estimator becomes severely biased. An empirical application to BTC-USD returns further demonstrates the practical advantages of the proposed robust Bayesian framework for financial time series analysis.
\end{abstract}
\noindent{\bf Key words and phrases}: robustness, outliers, pseudo-posterior, density power divergence, conditionally heteroscedastic time series model, GARCH model.

\section{Introduction}

The presence of outliers can significantly affect the accuracy and reliability of statistical inference. 
Outliers arise in a wide range of applications, including finance, economics, and environmental studies. 
To mitigate their impact, robust statistical methods have been extensively studied within the frequentist framework (see, for example, \cite{huber1981robust, ronchetti2021robust, loh2024robust}). A well-established finding from this line of research is that likelihood-based inference is particularly sensitive to outlying observations. Because the likelihood function plays a central role in Bayesian updating, this sensitivity naturally carries over to Bayesian inference. Consequently, despite the widespread use of Bayesian inference due to its favorable properties, the Bayes estimator under quadratic loss, when based on the ordinary posterior distribution, remains highly vulnerable to data contamination.

In Bayesian analysis, a traditional strategy for achieving robustness is to modify the assumed data-generating mechanism, typically by adopting heavy-tailed distributions for the model errors (see, for example, \cite{andrade2011bayesian, ohagan2012conflict}). While such approaches can mitigate the influence of extreme observations, they may lead to non-negligible efficiency loss when contamination is mild or absent. Motivated by this trade-off, a growing literature has sought to import robust frequentist ideas into the Bayesian framework through the construction of pseudo-posterior distributions. Notable examples include robust Bayesian procedures based on the Hellinger distance \cite{hooker2014bayesian}, the density power divergence \cite{ghosh2016robust}, and the $\gamma$-divergence \cite{nakagawa2020robust}. Related approaches have also been developed using robust estimating equations within likelihood-free or approximate Bayesian frameworks, such as the ABC-based robust posterior proposed by \cite{ruli2020robust}. Together, these methods yield Bayesian estimators that incur only mild efficiency loss under the model while exhibiting stability in the presence of outliers.

Despite this progress and the widespread use of time series models in various fields, most existing robust Bayesian methods have been developed for independent and identically distributed data or for regression-type models. In contrast, robust Bayesian inference for time series data remains comparatively underexplored, even though temporal dependence and dynamic model structures can substantially amplify the impact of outliers. 
However, extending the aforementioned Bayesian methods to time series settings is not straightforward. This is mainly because it requires careful treatment of conditional likelihoods, dependence structures, and recursive components inherent in many time series models.

In this study, we address this gap by developing a divergence-based robust Bayesian framework for conditionally heteroscedastic time series models. Specifically, we extend the density power divergence (DPD) approach of \cite{ghosh2016robust} to a broad class of models characterized by time-varying conditional variances. Originally introduced by \citet{basu1998}, the DPD provides a smooth interpolation between the Kullback--Leibler divergence and the $L_2$ distance, thereby offering a principled mechanism to balance efficiency and robustness. 
This divergence-based approach has been successfully used to develop robust estimators in the frequentist context (see, for example, \cite{fujisawa2008robust, lee2009minimum, ghosh2013robust, song2017robusta}). 
As also shown in \citet{ghosh2016robust}, the pseudo-posterior induced by the DPD inherits these desirable properties in i.i.d. settings. Building on this insight, we adapt the DPD-based Bayesian methodology to time series data, establish its large-sample properties under stationarity and ergodicity, and demonstrate its effectiveness through simulation studies and an empirical analysis of financial time series data.

The main contributions of this paper are threefold. First, we formulate a DPD-based pseudo-posterior for conditionally heteroscedastic time series models and define the associated Bayes estimator as the posterior mean. Second, under suitable regularity conditions, we establish the consistency and asymptotic normality of both the corresponding frequentist estimator and the DPD-based posterior distribution, including a Bernstein--von Mises type result. Third, through Monte Carlo experiments and an application to Bitcoin return data, we show that the proposed approach achieves substantial robustness gains with mild efficiency loss in uncontaminated settings.

The remainder of the paper is organized as follows. Section \ref{Main results} introduces the proposed DPD-based Bayesian estimation procedure and presents the main theoretical results. Section \ref{Simulation study} reports the results of a Monte Carlo study assessing finite-sample performance. Section \ref{Real data analysis} applies the method to Bitcoin return data, and Section \ref{Conclusion} concludes.

\section{Main results}\label{Main results}

Consider the following time series model with parameter $\theta \in \mathbb{R}^d$:
\begin{equation}\label{gts} 
X_t = \sigma_t(\theta)\,\epsilon_t,
\end{equation}
where $\sigma_t^2(\theta)$ denotes the conditional variance given $\mF_{t-1} = \sigma(X_s \mid s \le t-1)$, and 
$\{\epsilon_t \mid t \in \mathbb{Z}\}$ is a sequence of i.i.d.\ random variables with density $f_\epsilon$,  having zero mean and unit variance. 
 We assume that $\{X_t \mid t \in \mathbb{Z}\}$ generated by the above model with true parameter $\T_0$ is strictly stationary and ergodic. A broad class of stationary time series models falls within this framework, encompassing standard GARCH models as well as nonlinear and asymmetric extensions such as power-transformed and threshold GARCH models.

When estimating the model based on the observations $\{X_1,\cdots,X_n\}$, the conditional variance $\sigma_t^2(\theta)$ is often not explicitly obtainable due to the issue of initial values. Since $\sigma_t^2(\theta)$ is typically defined through a recurrence equation, it can be approximated by recursion with appropriately chosen initial values. See, for example, \cite{francq:zakoian:2004} for standard GARCH models and \cite{hamadeh2011asymptotic} for nonlinear GARCH models. In this study, we assume that a proxy for $\sigma_t^2(\theta)$ is obtained using this recursive method. Hereafter, we denote the resulting approximated process by $\{\tilde{\sigma}_t^2(\theta) \mid t = 1,\cdots,n\}$.

The feasible likelihood constructed using the proxy of the conditional variance is given by
\[
\tilde L(\theta \mid \bX) = \prod_{t=1}^n \tilde f_\theta(X_t \mid \mF_{t-1}),
\]
where $\bX = (X_1,\cdots,X_n)$ and $\tilde f_\T(x | \mF_{t-1})=\frac{1}{\tilde\sigma_t(\theta)}f_\ep\big(\frac{x}{\tilde \sigma_t(\theta)}\big)$. The posterior distribution corresponding to model~\eqref{gts} is then given by
\begin{equation}\label{posterior}
\pi(\theta \mid \bX) \propto \tilde L(\theta \mid \bX)\,\pi(\theta),
\end{equation}
where $\pi(\theta)$ denotes a prior distribution. Under squared error loss, the corresponding Bayes estimator, referred to as the expected ordinary posterior estimator (EOPE), is given by
\begin{equation}\label{EOPE}
   \hat{\theta}_n^{EOPE} = \int \theta\, \pi(\theta \mid \bX)\, d\theta.
 \end{equation}
As noted in the introduction, outliers can severely distort the likelihood, leading to an unreliable posterior distribution and, consequently, a biased Bayes estimator. This effect is illustrated in the simulation study below. To address this issue, we propose a robust Bayesian estimator using a divergence based approach.

\subsection{Density power divergence based Bayesian estimation}
In this subsection, we review the robust Bayesian estimation method based on the DPD introduced by \cite{ghosh2016robust} and extend it to the time series model (\ref{gts}).

The DPD, proposed by \cite{basu1998}, is a divergence measure that quantifies the discrepancy between two density functions. Specifically, for density functions $f$ and $g$, the DPD between them is defined as follows:
\begin{eqnarray*}\label{DPD}
d_\G (g, f)=\left\{\begin{array}{lc}
\displaystyle\int\Big\{f^{1+\G}(x)-\Big(1+\frac{1}{\G}\Big)\,g(x)\,f^\G(x)+\frac{1}{\G}\,g^{1+\G}(x)\Big\} dx &,\G>0, \\[20pt]
\displaystyle\int g(x)\big\{ \log g(x)-\log f(x) \big\} dx
&,\G=0.
\end{array} \right.
\end{eqnarray*}
Consider a family of parametric densities $\{ f_{\theta} \mid \theta \in \Theta \}$, and let $X_1, \cdots, X_n$ be a random sample with true density $f_{\T_0}$. The minimum DPD estimator (MDPDE) is defined as the minimizer of the empirical version of the divergence $d_\G(f_{\T_0},f_\T)$. Specifically,
\begin{eqnarray}\label{MDPDE0}
\hat \theta_{\G, n} = \argmax_{\theta \in \Theta}\, Q_n^{(\G)}(\T),
\end{eqnarray}
where \begin{eqnarray*}
Q_n^{(\G)}(\T) = \left\{ \begin{array}{ll}
   \displaystyle    \frac{1}{\G} \sum_{t=1}^n
     f_\theta^{\G}(X_t)- \frac{n}{1+\G}\int f_\theta^{1+\G}(x) dx    & \mbox{, $\G > 0$,}\\[20pt]
   \displaystyle  \sum_{t=1}^n\log f_\theta(X_t)      & \mbox{, $\G = 0$.}
   \end{array}
 \right.
\end{eqnarray*}
The tuning parameter $\G$ controls the trade-off between efficiency and robustness. In particular, the MDPDE with $\G$ close to 0 is known to retain high efficiency while providing robustness. We also note that the MDPDE with $\G=0$ is the MLE. 

Based on the above, \cite{ghosh2016robust} introduce the so-called $\G$-robustified posterior density by replacing the log-likelihood
with $Q_n^{(\G)}$ in Bayes' formula. Specifically, their robust posterior is obtained as follows:
\begin{align*}
    \pi_{\G}(\theta\mid\bX)
     & \propto \exp \big(Q_n^{(\G)}(\T)  \big) \pi(\theta),
\end{align*}
where $\pi(\T)$ is a prior of $\T$.
Note that the posterior distribution above reduces to the ordinary posterior when $\G=0$. 
\begin{remark}
It can be shown that, for each fixed $\theta\in\Theta$,
\[
Q_n^{(\gamma)}(\theta)=\frac{n}{\gamma}+\sum_{t=1}^n \log f_\theta(X_t)
-n+o(1)\qquad \text{as } \gamma \downarrow 0.
\]
Consequently, we have
\[
Q_n^{(\gamma)}(\theta) - \frac{n}{\gamma}
\longrightarrow
\sum_{t=1}^n \log f_\theta(X_t) - n,
\]
pointwise in $\theta$.
Since posterior densities are defined only up to a normalizing constant, the diverging term $n/\gamma$ and the additive constant $-n$ do not affect the
posterior density up to normalization. The above expansion therefore suggests, at a heuristic level, that the DPD-based posterior smoothly connects to the
ordinary Bayesian posterior as $\gamma \downarrow 0$.
However, the pointwise convergence above is not sufficient to justify that the DPD-based posterior and the ordinary posterior have similar distributional
shapes. Such an interpretation requires stronger notions of convergence, for example uniform convergence of the unnormalized posterior densities or, more
strongly, convergence in total variation. These issues are addressed under suitable conditions in Theorem~\ref{thm:gamma0} below for the time series model
\eqref{gts}.
\end{remark}

This type of pseudo-posterior can also be derived in time series models by extending the DPD for i.i.d. setting to dependent data. This extension is achieved using the following conditional version of the DPD, which compares the two conditional densities $f_{\T_0}(\cdot\mid\mathcal{F}_{t-1})$ and $f_\T(\cdot\mid\mathcal{F}_{t-1})$:
\begin{eqnarray*}
&&d_\G\left(f_{\T_0}(\cdot\mid\mathcal{F}_{t-1}),f_\T(\cdot\mid\mathcal{F}_{t-1})\right)\\
&&\\
&&=\left\{\begin{array}{ll}
                         \displaystyle \int\left\{ f_\theta^{1+\G}(x|\mathcal{F}_{t-1})-\Big(1+\frac{1}{\G}\Big)f_{\theta_0}(x|\mathcal{F}_{t-1})f_\theta^\G(x|\mathcal{F}_{t-1})+\frac{1}{\G}f_{\theta_0}^{1+\G}(x|\mathcal{F}_{t-1}) \right\}dx &, \G>0\\[20pt]
                          \displaystyle \int f_{\theta_0}(x|\mathcal{F}_{t-1})
                           \big\{\log f_{\theta_0}(x|\mathcal{F}_{t-1})-\log
                           f_\theta(x|\mathcal{F}_{t-1})\big\}dx&,\G=0.
                        \end{array}\right.
\end{eqnarray*}
Similarly to how the MDPDE in (\ref{MDPDE0}) is obtained, one can derive the MDPDE for the model (\ref{gts}) as follows:
\begin{eqnarray}\label{MDPDE}
\hat \theta_{\G, n} = \argmax_{\theta \in \Theta}\, \widetilde Q_n^{(\G)}(\T),
\end{eqnarray}
where
\begin{eqnarray}\label{Q}
\widetilde Q_n^{(\G)}(\T) = \left\{ \begin{array}{ll}
   \displaystyle    \frac{1}{\G} \sum_{t=1}^n
     \tilde f_\theta^{\G}(X_t|\mathcal{F}_{t-1})- \frac{1}{1+\G}\sum_{t=1}^{n}\int \tilde f_\theta^{1+\G}(x|\mathcal{F}_{t-1}) dx &, \G > 0,\\[20pt]
   \displaystyle  \sum_{t=1}^n\log \tilde f_\theta(X_t|\mathcal{F}_{t-1})      &, \G = 0 
   \end{array}
 \right.
\end{eqnarray}
(cf. subsection 3.1 in \cite{song2021sequential}).
Using the objective function $\widetilde Q_n^{(\gamma)}(\T)$ defined above, the $\gamma$-robust posterior density for the time series model \eqref{gts} is given
by
\begin{align*}
     \pi_{\G}(\theta\mid\bX)
     & \propto \exp \big(\widetilde Q_n^{(\G)}(\T)  \big) \pi(\theta).
\end{align*}
Correspondingly, the DPD analogue of the EOPE \eqref{EOPE}, referred to as the
expected DPD-based posterior estimator (EDPE), is defined as
\begin{align*}
    & \hat{\theta}_{\G,n}^{EDPE} = \int \theta\,  \pi_{\G}(\theta\mid\bX)d\theta.
\end{align*}
\begin{remark} The integral included in $\widetilde{Q}_n^{(\G)}(\T)$ can pose a computational burden. However, when the support of $f_\epsilon$ is $\mathbb{R}$, we can deal with the integral in advance. Specifically, observe that  
\[
\int \tilde f_\theta^{1+\G}(x|\mathcal{F}_{t-1})dx= \frac{1}{\tilde{\sigma}^\G_t(\theta)} \int f_\epsilon^{1+\G}(x)dx.
\]
Since the integral $\int f_\epsilon^{1+\G}(x) \, dx$ does not involve any parameters, it can be pre-computed either analytically or numerically for each $\G$. For instance, if $\epsilon_t$ follows $N(0,1)$, the integral evaluates to $(2\pi)^{-\gamma/2}\,(1+\G)^{-1/2}$. Using this pre-calculated value in place of the integral reduces the computational burden significantly.
\end{remark}

As mentioned earlier, the tuning parameter $\gamma$ controls the robustness of the MDPDE against outliers. In the same spirit, $\gamma$ also governs the
robustness of the DPD-based posterior, and consequently the EDPE above. This characteristic stems from the connection between the DPD-based posterior and the ordinary Bayesian posterior. 
To establish this connection more precisely, we impose the following regularity conditions.
\begin{enumerate}
\item[\bf C1.] The innovation density $f_\ep$ is strictly positive on $\mathbb{R}$.
\item[\bf C2.]
For each $x$ and $t$, $\tilde f_\theta(x \mid \mathcal F_{t-1})$ is continuous in $\theta$.
\item[\bf C3.]
For all sufficiently small $\gamma>0$,
\[
\sup_{\theta \in \Theta}
\left|
\widetilde Q_n^{(\gamma)}(\theta)-
\sum_{t=1}^n \log \tilde f_\theta(X_t \mid \mathcal F_{t-1})
\right|
\le V_n \gamma,
\]
where $V_n>0$ is a random variable that does not involve $\theta$.
\item[\bf C4.]
The prior density $\pi(\theta)$ is continuous on a compact parameter space $\Theta$.
\item[\bf C5.]
 For some $\theta^*\in\Theta$, $\pi(\theta^*)>0$.
\end{enumerate}
Condition {\bf C1} is standard in time series models such as GARCH type models, where the innovation density is typically taken to have full support on $\mathbb{R}$ (e.g., Gaussian or Student-$t$ distributions). It ensures that $\tilde f_\theta(x\mid\mathcal F_{t-1})$ and $\tilde L(\theta\mid\bX)$ are strictly positive.
Condition {\bf C5} is required to ensure the positivity of the normalizing constant of the ordinary posterior $\pi(\theta \mid \bX)$. When combined with
the positivity of the likelihood $\tilde L(\theta \mid \bX)$ on $\Theta$, this condition guarantees that the normalizing constant of the
limiting posterior is strictly positive.

\begin{theorem}\label{thm:gamma0}
Suppose that conditions {\bf C1} -- {\bf C4} hold and the parameter space $\Theta$ is compact.
Then, the unnormalized posterior densities satisfy the following uniform convergence
\[
\sup_{\theta \in \Theta}
\left|
\exp\!\big(\widetilde Q_n^{(\gamma)}(\theta)\big)\pi(\theta)
-
\tilde L(\theta \mid \bX)\pi(\theta)
\right|
\longrightarrow 0\quad as\ \gamma\downarrow 0.
\]
Additionally, if condition {\bf C5} holds, the DPD-based posterior 
$\pi_\gamma(\theta \mid \bX)$ converges to the ordinary posterior
$\pi(\theta \mid \bX)$ in total variation, that is,
\[
\frac12\int_\Theta |\pi_\gamma(\theta\mid\mathbf X_n)-\pi(\theta\mid\mathbf X_n)|\,d\theta
\longrightarrow 0\quad as\ \gamma\downarrow 0.
\]
\end{theorem}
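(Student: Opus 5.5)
The plan is to work with a fixed sample $\bX$ and treat everything as a deterministic function of $\theta$. Write $\ell_n(\theta)=\sum_{t=1}^n\log\tilde f_\theta(X_t\mid\mathcal F_{t-1})$, so that $\tilde L(\theta\mid\bX)=e^{\ell_n(\theta)}$; by {\bf C1} this is finite and strictly positive. The first step is to show that $\ell_n$ is bounded above on $\Theta$. By {\bf C2}, $\theta\mapsto\tilde f_\theta(X_t\mid\mathcal F_{t-1})$ is continuous for each $t$, hence so is $\ell_n$. Since $\Theta$ is compact, $\tilde L$ attains a finite maximum $M_n:=\sup_\Theta \tilde L(\theta\mid\bX)<\infty$. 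Also $\sup_\Theta\pi=:P<\infty$ by {\bf C4}.

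For the uniform convergence I would use the elementary bound $|e^a-e^b|\le e^{b}\,(e^{|a-b|}-1)$, applied with $a=\widetilde Q_n^{(\gamma)}(\theta)$ and $b=\ell_n(\theta)$. Condition {\bf C3} gives $|a-b|\le V_n\gamma$ uniformly in $\theta$ for all small $\gamma$. Therefore
\[
\sup_{\theta\in\Theta}\big|e^{\widetilde Q_n^{(\gamma)}(\theta)}\pi(\theta)-\tilde L(\theta\mid\bX)\pi(\theta)\big|\le M_nP\,(e^{V_n\gamma}-1)\longrightarrow 0
\]
as $\gamma\downarrow0$, because $V_n$ does not depend on $\theta$ or $\gamma$.

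For the total variation claim, set $g_\gamma=e^{\widetilde Q_n^{(\gamma)}}\pi$ and $g=\tilde L\pi$, with normalizers $Z_\gamma=\int_\Theta g_\gamma$ and $Z=\int_\Theta g$.

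\begin{itemize}
\item \emph{Integrability.} Compactness gives $|\Theta|<\infty$, where $|\Theta|$ denotes Lebesgue measure. Hence the uniform bound yields $|Z_\gamma-Z|\le |\Theta|\,M_nP(e^{V_n\gamma}-1)\to0$, and in particular $Z_\gamma<\infty$.
\item \emph{Positivity of $Z$.} This is the one genuine subtlety. By {\bf C5} and continuity of $\pi$, we have $\pi>\pi(\theta^*)/2$ on a neighbourhood $U\cap\Theta$ of $\theta^*$. On that set $\tilde L$ is continuous and positive, so it has a positive minimum on a compact sub-neighbourhood. Hence $Z>0$, provided $U\cap\Theta$ has positive Lebesgue measure. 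I would argue this by taking $\Theta$ to be the closure of its interior (or $\theta^*$ in its support), as is implicit in treating $\pi$ as a density on $\Theta$. It then follows that $Z_\gamma>0$ for small $\gamma$.
\item \emph{Conclusion.} The standard inequality
\[
\int\Big|\frac{g_\gamma}{Z_\gamma}-\frac{g}{Z}\Big|\le \frac{1}{Z_\gamma}\int|g_\gamma-g|+\Big|\frac{1}{Z_\gamma}-\frac1Z\Big|Z\le \frac{2}{Z_\gamma}\int|g_\gamma-g|
\]
then gives a bound of order $2|\Theta|M_nP(e^{V_n\gamma}-1)/Z_\gamma\to0$.
\end{itemize}

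The main obstacle I expect is justifying $M_n<\infty$ and $Z>0$ rigorously. The first depends on continuity of each $\tilde f_\theta(X_t\mid\mathcal F_{t-1})$ in $\theta$, which is exactly {\bf C2}. The second depends on the prior's positivity region having positive measure, for which I would use continuity of $\pi$ from {\bf C4} together with {\bf C5}. Everything else is a direct consequence of {\bf C3}.
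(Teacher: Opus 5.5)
Your proof is correct and follows essentially the same route as the paper. Both arguments take a uniform bound on $|e^{\widetilde Q_n^{(\gamma)}-\ell_n}-1|$ from {\bf C3}, multiply it by $\tilde L\pi$ (bounded via {\bf C2}, {\bf C4} and compactness), integrate over the finite-measure $\Theta$, and then apply the standard normalization inequality once $Z>0$. Your bound $e^{V_n\gamma}-1$ coincides with the paper's $\max\{e^{V_n\gamma}-1,1-e^{-V_n\gamma}\}$, and your remark that $Z>0$ requires a neighbourhood of $\theta^*$ in $\Theta$ with positive Lebesgue measure makes precise a step the paper only asserts.
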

These limiting results imply that the DPD-based posterior constitutes a continuous robustification of the ordinary Bayesian posterior. When $\gamma$ is  very close to zero, $\pi_\gamma(\theta\mid \bX)$ remains highly similar to the ordinary posterior $\pi(\theta\mid \bX)$, thereby retaining high efficiency. At the same time, any $\gamma>0$ downweights the influence of extreme observations, providing robustness analogous to that of the MDPDE. Hence the tuning parameter $\gamma$ controls a smooth trade-off between efficiency and robustness also in Bayesian inference.

As a direct consequence of Theorem~\ref{thm:gamma0}, the convergence of the posterior distributions in total variation, together with the compactness of $\Theta$, implies the convergence of the corresponding posterior expectations. This leads to the following corollary.
\begin{corollary}\label{cor:edpe}
Under the conditions of Theorem~\ref{thm:gamma0}, we have
\[
\hat\theta^{EDPE}_{\gamma,n}
=\int_\Theta \theta\,\pi_\gamma(\theta\mid\bX)\,d\theta\
\longrightarrow\
\hat\theta^{EOPE}_{n}=\int_\Theta \theta\,\pi(\theta\mid\bX)\,d\theta
\quad as\ \gamma\downarrow 0.
\]
\end{corollary}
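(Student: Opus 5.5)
The corollary follows from the total variation part of Theorem~\ref{thm:gamma0} together with the boundedness of $\Theta$. We work with the sample $\bX$ fixed. All statements are pathwise on the event where the conclusions of Theorem~\ref{thm:gamma0} hold; in particular $\pi_\gamma(\cdot\mid\bX)$ and $\pi(\cdot\mid\bX)$ are well-defined probability densities on $\Theta$ for all sufficiently small $\gamma>0$.

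Since $\Theta$ is compact, it is bounded, so $M:=\sup_{\theta\in\Theta}\|\theta\|<\infty$. Both posterior means therefore exist as finite vectors in $\mathbb{R}^d$. Their difference satisfies
\[
\big\|\hat\theta^{EDPE}_{\gamma,n}-\hat\theta^{EOPE}_{n}\big\|
=\Big\|\int_\Theta \theta\,\big\{\pi_\gamma(\theta\mid\bX)-\pi(\theta\mid\bX)\big\}\,d\theta\Big\|
\le M\int_\Theta \big|\pi_\gamma(\theta\mid\bX)-\pi(\theta\mid\bX)\big|\,d\theta .
\]
This uses only the triangle inequality for Bochner (or componentwise) integrals. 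Applying it coordinatewise, with $|\theta_j|\le M$, gives the same bound for each component.

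By the second assertion of Theorem~\ref{thm:gamma0}, which requires conditions {\bf C1}--{\bf C5}, the right-hand side equals $2M$ times the total variation distance. That distance tends to $0$ as $\gamma\downarrow 0$, so the posterior means converge, which is the claim.

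The only point that needs care is that the statement says ``under the conditions of Theorem~\ref{thm:gamma0}''. That should be read as including {\bf C5}, because the total variation part of the theorem is exactly what is used here. Nothing else is an obstacle: unlike a general posterior-mean continuity argument, no uniform integrability or tail control is needed, since compactness of $\Theta$ bounds the integrand $\theta$ directly.
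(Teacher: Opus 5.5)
Your proposal is correct and follows the route the paper takes. The paper gives no separate proof: it says the corollary follows from the total variation convergence in Theorem~\ref{thm:gamma0} together with compactness of $\Theta$, which is exactly your bound $\|\hat\theta^{EDPE}_{\gamma,n}-\hat\theta^{EOPE}_{n}\|\le M\int_\Theta|\pi_\gamma(\theta\mid\bX)-\pi(\theta\mid\bX)|\,d\theta$ with $M=\sup_{\theta\in\Theta}\|\theta\|$, and you were right to read the hypotheses as including {\bf C5}, since the total variation assertion is stated under it.
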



\begin{remark}\label{rmk:C3}
Although Condition {\bf C3} in Theorem \ref{thm:gamma0} is stated in terms of the objective function $\widetilde Q_n^{(\gamma)}(\theta)$, in applications it is often convenient to work with a shifted version. Specifically, for any function $K_n(\gamma)$ that is independent of $\theta$, define
\[
\widetilde Q_{n,K}^{(\gamma)}(\theta):=\widetilde Q_n^{(\gamma)}(\theta)-K_n(\gamma).
\]
Then $\widetilde Q_{n,K}^{(\gamma)}(\theta)$ and $\widetilde Q_n^{(\gamma)}(\theta)$ induce the same posterior distribution. Consequently, we have
\[
\pi_\gamma(\theta\mid\bX)\propto \exp\!\big(\widetilde Q_n^{(\gamma)}(\theta)\big)\pi(\theta)
\propto \exp\!\big(\widetilde Q_{n,K}^{(\gamma)}(\theta)\big)\pi(\theta),
\]
so adding or subtracting a $\theta$-independent term does not alter the resulting posterior distribution. Therefore, when verifying Condition~\textbf{C3} for a given model, one may equivalently establish it for $\widetilde Q_{n,K}^{(\gamma)}(\theta)$ rather than for $\widetilde Q_n^{(\gamma)}(\theta)$ itself.
\end{remark}

\begin{remark}\label{rmk:gamma}
The tuning parameter $\gamma$ plays a role that is fundamentally different from that of the model parameters $\T$. While the components of $\T$ are associated with the data-generating dynamics and possess true underlying values, the parameter $\gamma$ does not admit a true value.
Instead, for a given data set, an optimal choice of $\gamma$ can be defined only relative to a specific inferential or predictive objective and reflects a procedure-level trade-off between robustness and efficiency.

From a Bayesian perspective, one may therefore view $\gamma$ as a decision or design parameter rather than a genuine model parameter. Although it is conceptually possible to assign a prior distribution to $\gamma$ and estimate it jointly with $\T$, doing so would change the nature of the problem itself, as the resulting posterior would no longer correspond to inference on parameters of the data-generating process.
Moreover, the Bernstein--von Mises type results established in the next subsection rely crucially on the frequentist asymptotic normality of the MDPDE
with $\gamma$ treated as a fixed tuning constant. For these reasons, and to preserve a clear theoretical interpretation of the asymptotic analysis, we treat $\gamma$ as fixed throughout the theoretical development.
\end{remark}

\begin{remark}\label{rmk:select_gamma}
In empirical applications, selecting an appropriate value of $\gamma$ is an important practical issue. In the frequentist literature on MDPD estimation, $\gamma$ is typically treated as a nuisance robustness parameter, and its choice is guided by the trade-off between efficiency and robustness; see, for example, \cite{warwick2005choosing} and \cite{fujisawa2008robust}. In practice, relatively small values of $\gamma$ are often recommended, since
excessively large values may lead to a substantial loss of efficiency when the degree of contamination is limited. More importantly, such values of $\gamma$ are typically sufficient to achieve a favorable balance between robustness and efficiency; see, for instance, \cite{lee2009minimum} and \cite{song2017robusta}.

From a Bayesian decision-theoretic perspective, an optimal value of $\gamma$ can also be defined for a given data set by minimizing an out-of-sample predictive loss. In particular, selecting $\gamma$ based on the RMSE of one-step-ahead volatility forecasts provides a simple and effective data-driven rule that directly reflects the robustness--efficiency trade-off in terms of predictive performance.In the real data analysis of Section \ref{Real data analysis}, we adopt this approach and select $\gamma$ by minimizing the out-of-sample forecasting RMSE, which yields values of $\gamma$ in the range $[0.2,0.3]$, consistent with empirical findings in the existing literature.
\end{remark}

\subsection{Asymptotic Properties}\label{sub:asymp}
To establish the asymptotic properties of $\hat\T_{\G,n}^{EDPE}$, we impose the regularity conditions. For this, let $f_\T(x|\mathcal{F}_{t-1})$ denote the infeasible conditional density obtained by replacing $\tilde \sigma_t(\T)$ in $\tilde f_\T(x|\mathcal{F}_{t-1})$ with $\sigma_t(\T)$. 
Although the same notation $Q_n^{(\gamma)}(\theta)$ was used in \eqref{MDPDE0} for the i.i.d.\ setting, we now use $Q_n^{(\gamma)}(\theta)$ to denote the counterpart of $\widetilde Q_n^{(\G)}$ in \eqref{MDPDE}, replacing $\tilde f_\T(x|\mathcal{F}_{t-1})$ with $f_\T(x|\mathcal{F}_{t-1})$. Further, let $\tilde q_t^{(\G)}(\T)$ be the function satisfying $\widetilde Q_n^{(\G)}(\T)=\sum_{t=1}^n \tilde q_t^{(\G)}(\T)$, and define $q_t^{(\G)}(\T)$ in the same manner. Hereafter, $\| \cdot\|$ denotes the $l_1$ norm for matrices and vectors. For notational convenience, we use $\pa_{\theta}$ and $\paa$ to denote $\frac{\pa}{\pa\T}$ and $\frac{\pa^2}{\pa\theta \pa \theta'}$, respectively. We now introduce the following conditions:
\begin{enumerate}
\item[\bf A1.]  $q_t^{(\G)}(\T)$ is continuous in $\T$, and $\{q_t^{(\G)}(\T)|t\in\mathbb{Z}\}$ is strictly stationary and ergodic for each $\T\in\Theta$.
\item[\bf A2.]
$\displaystyle\E \sup_{\T\in\Theta} \big|q_t^{(\G)}(\T)\big|<\infty$ and $\displaystyle \frac{1}{n}\sum_{t=1}^n \sup_{\T\in\Theta} \big| q_t^{(\G)}(\T)-\tilde q_t^{(\G)}(\T)\big|=o(1)\quad a.s.$ 
\item[\bf A3.] $\{\pa_\T q_t^{(\G)}(\T_0), \mathcal{F}_t  \mid t \in \mathbb{Z}\}$ is a martingale difference sequence.
\item[\bf A4.]$\displaystyle \frac{1}{\sqrt{n}}\sum_{t=1}^n \big\| \pa_\T\,q_t^{(\G)}(\T_0)-\pa_\T\,\tilde q_t^{(\G)}(\T_0)\big\|= o(1)\quad a.s.$
\item[\bf A5.]For some neighborhood $N_1(\T_0)$ of $\T_0$, $$\displaystyle \frac{1}{n}\sum_{t=1}^{n}\sup_{\theta \in N_1(\T_0)}\big\|\paa  q_t^{(\G)}(\T)-\paa  \tilde q_t^{(\G)}(\T)\big\|= o(1)\quad a.s.$$
\item[\bf A6.] For some neighborhood $N_2(\T_0)$ of $\T_0$,
 $$\displaystyle\E \sup_{\theta \in N_2(\T_0)}\big\|\paa  q_t^{(\G)}(\T)\big\|  <\infty.$$
\item[\bf A7.] The matrices $\mathcal{J}_\G=-\E \big[ \paa  q_t^{(\G)}(\T_0)\big]$ and $\mathcal{I}_\G=\E \big[ \pa_\T q_t^{(\G)}(\T_0) \pa_{\T'} q_t^{(\G)}(\T_0)\big]$ exist and $\mathcal{J}_\G$ is positive definite.
\end{enumerate} 

Assumptions \textbf{A1--A2} ensure a uniform law of large numbers for the objective function and control the discrepancy between the infeasible criterion based on $f_\T(x|\mathcal{F}_{t-1})$ and its feasible counterpart based on $\tilde f_\T(x|\mathcal{F}_{t-1})$.  Assumptions \textbf{A3--A4} provide a martingale difference structure that leads to a central limit theorem. Assumptions \textbf{A5--A6} serve as technical conditions for establishing uniform convergence of the second derivative $\paa \widetilde Q_n^{(\G)}(\T)$.

These assumptions closely follow the standard regularity conditions used to establish consistency and asymptotic normality of M-estimators in dependent time series models; see, for example, \cite{ling2010general}. For DPD based M-estimation in time series settings, similar conditions are employed in \cite{song2021sequential}. Importantly, the same set of assumptions is also used in the proof of the Bernstein--von Mises type results presented below.

We first establish the consistency and asymptotic normality of the MDPDE $\hat\theta_{\gamma,n}$ from a frequentist perspective. We then turn to the Bayesian counterpart and study the asymptotic behavior of the DPD-based posterior distribution.

\begin{theorem}\label{thm1}
Suppose that assumptions {\bf A1} and {\bf A2} hold. Then, $\hat\T_{\G,n}$ converges almost surely to $\T_0$. Furthermore, if assumptions {\bf A3}–{\bf A7} are also satisfied, we obtain
\[\sqrt{n}(\hat\T_{\G,n}-\T_0)\ \stackrel{d}{\longrightarrow}\ N_d(0, \mathcal{J}^{-1}_{\G} \mathcal{I}_{\G}\mathcal{J}^{-1}_{\G}).\]
\end{theorem}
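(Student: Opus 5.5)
The proof follows the standard M-estimation route for stationary ergodic time series, in the spirit of \cite{ling2010general}. It has two parts: consistency via a uniform law of large numbers and an identification argument, and asymptotic normality via a Taylor expansion of the score.

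\textbf{Consistency.} By {\bf A1} and the moment bound in {\bf A2}, the uniform ergodic theorem for stationary ergodic sequences of continuous random functions on the compact $\Theta$ gives
\[
\sup_{\T\in\Theta}\Big|\frac1n Q_n^{(\G)}(\T)-\E q_t^{(\G)}(\T)\Big|\to0 \quad a.s.
\]
The second part of {\bf A2} then transfers this to $\widetilde Q_n^{(\G)}$. Next I need identification: $\E q_t^{(\G)}(\T)$ is uniquely maximized at $\T_0$. Conditionally on $\mF_{t-1}$,
\[
\E\big[q_t^{(\G)}(\T)\mid\mF_{t-1}\big]=-\frac1\G\,d_\G\big(f_{\T_0}(\cdot|\mF_{t-1}),f_\T(\cdot|\mF_{t-1})\big)+C_t,
\]
where $C_t$ does not depend on $\T$. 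Since the DPD is nonnegative and vanishes only when the two densities coincide, $\T_0$ maximizes the limit. Uniqueness requires $\sigma_t^2(\T)=\sigma_t^2(\T_0)$ a.s. to imply $\T=\T_0$, which is the usual identifiability property of the model; I will invoke it as implicit in the setup. A standard argument (limit has a unique maximizer, $\Theta$ compact, convergence uniform) then yields $\hat\T_{\G,n}\to\T_0$ a.s.

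\textbf{Asymptotic normality.} Assume $\T_0$ is interior, so $\pa_\T\widetilde Q_n^{(\G)}(\hat\T_{\G,n})=0$ eventually. A mean value expansion gives
\[
0=\frac{1}{\sqrt n}\pa_\T\widetilde Q_n^{(\G)}(\T_0)+\frac1n\paa\widetilde Q_n^{(\G)}(\T_n^*)\,\sqrt n(\hat\T_{\G,n}-\T_0),
\]
with $\T_n^*$ between $\hat\T_{\G,n}$ and $\T_0$.

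For the score, {\bf A4} lets me replace $\tilde q_t$ by $q_t$ at cost $o(1)$. By {\bf A3} and {\bf A7}, $\pa_\T q_t^{(\G)}(\T_0)$ is a stationary ergodic square-integrable martingale difference sequence. The martingale CLT (Billingsley), combined with the Cramér–Wold device, then gives convergence to $N_d(0,\mathcal I_\G)$.

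For the Hessian, {\bf A5} reduces it to $\frac1n\sum_t\paa q_t^{(\G)}(\T_n^*)$. By {\bf A6} and the uniform ergodic theorem on a shrinking closed ball inside $N_2(\T_0)$, $\frac1n\sum_t\paa q_t^{(\G)}(\T)$ converges uniformly to $\E\paa q_t^{(\G)}(\T)$. This limit is continuous by dominated convergence. Together with consistency of $\T_n^*$, the Hessian term converges a.s. to $-\mathcal J_\G$.

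Since $\mathcal J_\G$ is invertible, Slutsky's lemma finishes the proof.

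The main obstacle is the identification step in the consistency part, because the stated assumptions do not explicitly include it. I would establish it through the conditional DPD representation above, plus model identifiability, for which I would try the standard argument used for GARCH-type recursions. The remaining steps are routine given {\bf A1}–{\bf A7}.
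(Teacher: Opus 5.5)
Your proposal is correct and follows essentially the same route as the paper: a uniform ergodic theorem plus the second part of {\bf A2} for the uniform LLN, identification via the conditional DPD representation, and a mean-value expansion of the score combined with the martingale CLT and a.s.\ convergence of the Hessian under {\bf A5}--{\bf A6}. The paper first shows $\sqrt{n}(\hat\T_{\G,n}-\T_0)=O_P(1)$, whereas you invert the Hessian directly; this difference is only cosmetic.

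One harmless slip: the correct identity is $\E[q_t^{(\G)}(\T)\mid\mF_{t-1}]=-\frac{1}{1+\G}\,d_\G\big(f_{\T_0}(\cdot|\mF_{t-1}),f_\T(\cdot|\mF_{t-1})\big)+C_t$, with coefficient $-\frac{1}{1+\G}$ rather than $-\frac{1}{\G}$, though only the negative sign matters. The identifiability and interiority of $\T_0$ that you flag are also used only implicitly in the paper's own proof.
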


\begin{theorem}\label{thm2}
Suppose that assumptions {\bf A1}, {\bf A2}, and {\bf A7} are satisfied and let $\pi(\theta)$ be any prior density that is
positive and continuous at $\T_0$. Then, it holds that
\begin{equation*}
     \int_{\mathbb{R}^d} \Big| \pi_\G(t|\bX) - \frac{|\mathcal{J}_\G|^{1/2}}{(2\pi)^{d/2}} e^{- \frac{1}{2} t' \mathcal{J}_\G t} \Big| dt\ \stackrel{a.s.}{\longrightarrow}\ 0,
\end{equation*}
where $ \pi_\G(t|\bX)$ is the $\G$-posterior density of $t = \sqrt{n}(\theta - \hat\T_{\G,n} )$ given $\bX$.
\end{theorem}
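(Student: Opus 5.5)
We follow the classical Bernstein--von Mises argument (as in Ghosh and Basu's i.i.d.\ proof), adapted to the time series criterion $\widetilde Q_n^{(\G)}$. Write the posterior of $t=\sqrt n(\T-\hat\T_{\G,n})$ as
\[
\pi_\G(t\mid\bX)=\frac{\pi(\hat\T_{\G,n}+t/\sqrt n)\,\exp\{w_n(t)\}}{C_n},\qquad
w_n(t)=\widetilde Q_n^{(\G)}(\hat\T_{\G,n}+t/\sqrt n)-\widetilde Q_n^{(\G)}(\hat\T_{\G,n}),
\]
with $C_n=\int \pi(\hat\T_{\G,n}+t/\sqrt n)e^{w_n(t)}\,dt$. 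It then suffices to show that
\begin{equation}\label{eq:bvm-key}
\int_{\mathbb{R}^d}\Big|\pi(\hat\T_{\G,n}+t/\sqrt n)\,e^{w_n(t)}-\pi(\T_0)\,e^{-\frac12 t'\mathcal J_\G t}\Big|\,dt\ \stackrel{a.s.}{\longrightarrow}\ 0 .
\end{equation}
Indeed, \eqref{eq:bvm-key} gives $C_n\to\pi(\T_0)(2\pi)^{d/2}|\mathcal J_\G|^{-1/2}$ a.s., and the stated $L_1$ convergence follows from the triangle inequality. Throughout we work on the probability-one event where $\hat\T_{\G,n}\to\T_0$ (Theorem~\ref{thm1}, first part, which uses only {\bf A1}--{\bf A2}) and where the ergodic theorem holds for the relevant sums.

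The two ingredients are a Taylor expansion and a uniform separation bound. Since $\hat\T_{\G,n}$ is an interior maximizer, $\pa_\T\widetilde Q_n^{(\G)}(\hat\T_{\G,n})=0$. Hence
\[
w_n(t)=-\tfrac12\,t'\,\widetilde{\mathcal J}_n(t)\,t,\qquad
\widetilde{\mathcal J}_n(t)=-\frac1n\,\paa\widetilde Q_n^{(\G)}(\bar\T),
\]
with $\bar\T$ between $\hat\T_{\G,n}$ and $\hat\T_{\G,n}+t/\sqrt n$. Although the statement lists only {\bf A1}, {\bf A2}, {\bf A7}, twice-differentiability and local uniform control of the Hessian are implicit (I would invoke {\bf A5}--{\bf A6} as in the proof of Theorem~\ref{thm1}). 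These give $\sup_{\T\in N(\T_0)}\|-n^{-1}\paa\widetilde Q_n^{(\G)}(\T)-\mathcal J(\T)\|\to0$ a.s., with $\mathcal J(\cdot)$ continuous and $\mathcal J(\T_0)=\mathcal J_\G$.

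Split $\mathbb R^d$ into three regions: $A_1=\{\|t\|\le M\}$, $A_2=\{M<\|t\|\le\delta\sqrt n\}$, and $A_3=\{\|t\|>\delta\sqrt n\}$.

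On $A_1$ the integrand converges uniformly to $0$, by continuity of $\pi$ at $\T_0$ and $\widetilde{\mathcal J}_n(t)\to\mathcal J_\G$ uniformly on compacts.

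On $A_2$ we choose $\delta$ small enough that $\widetilde{\mathcal J}_n(t)\ge\frac12\mathcal J_\G$ for large $n$ (positive definiteness from {\bf A7}) and $\pi$ is bounded near $\T_0$. The integrand is then dominated by $c\,e^{-\frac14 t'\mathcal J_\G t}$, whose integral over $\|t\|>M$ is arbitrarily small for large $M$.

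On $A_3$ we need an identifiability gap. By {\bf A1}--{\bf A2} and the uniform ergodic theorem, $n^{-1}\widetilde Q_n^{(\G)}(\T)\to \E q_t^{(\G)}(\T)$ uniformly, and $\T_0$ is the unique maximizer, as already used for consistency. So there is $\eta>0$ with
\[
\sup_{\|\T-\hat\T_{\G,n}\|>\delta} n^{-1}\big\{\widetilde Q_n^{(\G)}(\T)-\widetilde Q_n^{(\G)}(\hat\T_{\G,n})\big\}\le-\eta
\]
eventually. Since $\Theta$ is compact, the $A_3$ contribution is then bounded by $n^{d/2}e^{-n\eta}\int\pi(\T)\,d\T\to0$; the Gaussian tail over $A_3$ vanishes trivially.

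The main obstacle is the region $A_3$, specifically obtaining the uniform gap $\eta$ almost surely together with the feasible/infeasible discrepancy. I would handle this by deriving it from the same uniform strong law, and the argument by contradiction, that yields consistency in Theorem~\ref{thm1}, transferring from $q_t^{(\G)}$ to $\tilde q_t^{(\G)}$ via the second part of {\bf A2}. A secondary subtlety is that $\pi$ needs to be bounded on $\Theta$ (or at least integrable) for the $A_3$ bound to hold. I would assume it is a proper density on the compact $\Theta$, so that $\int\pi=1$ suffices without a boundedness assumption.
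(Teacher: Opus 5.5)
Your proposal is correct and follows essentially the same route as the paper: you rewrite the posterior around $\hat\T_{\G,n}$ and compare the unnormalized density with $\pi(\T_0)e^{-\frac12 t'\mathcal{J}_\G t}$. On $\|t\|\le\delta\sqrt n$ you use the second-order Taylor expansion with $\pa_\T\widetilde Q_n^{(\G)}(\hat\T_{\G,n})=0$, and on the complement you use the separation bound of Lemma~\ref{Q.R2} together with the $n^{d/2}e^{-n\epsilon}$ estimate. Your extra split at $\|t\|=M$, with a uniform-in-$t$ Hessian bound, is a slightly more careful substitute for the paper's pointwise dominated-convergence step, and you are right that this step (like the paper's use of \eqref{conv.Q2}) tacitly needs \textbf{A5}--\textbf{A6}, an interior maximizer, and $\int\pi=1$.
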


Theorem \ref{thm2} shows that the DPD-based posterior distribution satisfies a Bernstein--von Mises type theorem. In particular, this result implies that the EDPE is asymptotically equivalent to the MDPDE and inherits the same asymptotic distribution, as formalized in the next theorem.

\begin{theorem}\label{thm3}
In addition to the assumptions of Theorem \ref{thm2}, assume that $\int \|\T\|\pi(\T)\,d\T <\infty$. Then, it holds that
\[\sqrt{n}(\hat\T_{\G,n}^{EDPE} - \hat\T_{\G,n})\ \stackrel{a.s.}{\longrightarrow}\ 0.\]
Furthermore, if assumptions {\bf A3}–{\bf A6} are also satisfied, we obtain
\[\sqrt{n}(\hat\T_{\G,n}^{EDPE}-\T_0)\ \stackrel{d}{\longrightarrow}\ N_d(0, \mathcal{J}^{-1}_{\G} \mathcal{I}_{\G}\mathcal{J}^{-1}_{\G}).\]
\end{theorem}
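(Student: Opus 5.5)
The plan is to read Theorem~\ref{thm3} off the Bernstein--von Mises result of Theorem~\ref{thm2}, using the classical argument of Ghosh and Ramamoorthi for posterior means. With $t=\sqrt{n}(\T-\hat\T_{\G,n})$ we have $\T=\hat\T_{\G,n}+t/\sqrt{n}$, hence
\[
\sqrt{n}\big(\hat\T^{EDPE}_{\G,n}-\hat\T_{\G,n}\big)=\int_{\mathbb{R}^d} t\,\pi_\G(t\mid\bX)\,dt
=\int_{\mathbb{R}^d} t\Big(\pi_\G(t\mid\bX)-\phi_\G(t)\Big)\,dt,
\]
where $\phi_\G(t)=|\mathcal{J}_\G|^{1/2}(2\pi)^{-d/2}e^{-t'\mathcal{J}_\G t/2}$. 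The second equality holds because $\phi_\G$ has mean zero. The first claim therefore reduces to showing
\[
\int\|t\|\,\big|\pi_\G(t\mid\bX)-\phi_\G(t)\big|\,dt\to0 \quad a.s.,
\]
which is a first-moment strengthening of Theorem~\ref{thm2}.

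To get this, I will revisit the proof of Theorem~\ref{thm2} rather than use only its statement. Write $\pi_\G(t\mid\bX)=C_n^{-1}g_n(t)$, where
\[
g_n(t)=\pi(\hat\T_{\G,n}+t/\sqrt n)\exp\{\widetilde Q_n^{(\G)}(\hat\T_{\G,n}+t/\sqrt n)-\widetilde Q_n^{(\G)}(\hat\T_{\G,n})\}.
\]
That proof shows $C_n\to\pi(\T_0)(2\pi)^{d/2}|\mathcal{J}_\G|^{-1/2}$ and $\int|g_n-\pi(\T_0)e^{-t'\mathcal{J}_\G t/2}|dt\to0$. It does so by splitting into three regions: $\|t\|\le\delta\sqrt n$, $\delta\sqrt n<\|t\|$ with $\T\in\Theta$, and the complement. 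I will repeat the same split with the integrand multiplied by $\|t\|$.

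\emph{Inner region.} A second-order Taylor expansion together with the uniform convergence of $-n^{-1}\paa\widetilde Q_n^{(\G)}$ to $\mathcal{J}_\G$ near $\T_0$ gives
\[
g_n(t)\le 2\pi(\T_0)\,e^{-t'\mathcal{J}_\G t/4}
\]
for small $\delta$ and large $n$. This bound is integrable against $\|t\|$, and dominated convergence then handles the inner region.

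\emph{Outer region.} Consistency (Theorem~\ref{thm1}) and the uniform LLN from \textbf{A1}--\textbf{A2}, combined with identifiability of $\T_0$ as the unique maximizer of $\E q_t^{(\G)}$, give $\varepsilon>0$ with
\[
\sup_{\|\T-\hat\T_{\G,n}\|>\delta}n^{-1}\big(\widetilde Q_n^{(\G)}(\T)-\widetilde Q_n^{(\G)}(\hat\T_{\G,n})\big)\le-\varepsilon
\]
eventually a.s. The outer contribution is then at most
\[
e^{-n\varepsilon}\int\sqrt n\,\|\T-\hat\T_{\G,n}\|\,\pi(\T)\,n^{d/2}\,d\T\le e^{-n\varepsilon}n^{(d+1)/2}\Big(\int\|\T\|\pi(\T)d\T+\|\hat\T_{\G,n}\|\Big)\to0.
\]
This is exactly where the moment condition $\int\|\T\|\pi(\T)d\T<\infty$ enters. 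The Gaussian tail $\int_{\|t\|>\delta\sqrt n}\|t\|\phi_\G(t)\,dt$ vanishes trivially.

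Combining the two regions with $C_n$ converging to the correct constant yields the first claim. For the second claim, write
\[
\sqrt n(\hat\T^{EDPE}_{\G,n}-\T_0)=\sqrt n(\hat\T^{EDPE}_{\G,n}-\hat\T_{\G,n})+\sqrt n(\hat\T_{\G,n}-\T_0).
\]
The first term tends to $0$ a.s., and the second converges in distribution to $N_d(0,\mathcal{J}_\G^{-1}\mathcal{I}_\G\mathcal{J}_\G^{-1})$ by Theorem~\ref{thm1} under \textbf{A3}--\textbf{A7}. Slutsky's lemma concludes.

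The main obstacle is the inner-region domination. It needs the almost-sure uniform control of the Hessian near $\T_0$, which uses \textbf{A5}--\textbf{A6}, whereas Theorem~\ref{thm2} as stated lists only \textbf{A1}, \textbf{A2}, \textbf{A7}. If the proof of Theorem~\ref{thm2} already establishes such a quadratic lower bound on $-\widetilde Q_n^{(\G)}$ near $\hat\T_{\G,n}$, I will reuse it verbatim and simply multiply by $\|t\|$. Otherwise I will invoke the uniform Hessian convergence implied by \textbf{A5}--\textbf{A6} and the ergodic theorem.
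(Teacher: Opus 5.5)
Your proposal is correct and follows the paper's proof. The paper likewise reruns the argument of Theorem~\ref{thm2} with the extra weight $\|t\|$, uses $\int\|\T\|\pi(\T)\,d\T<\infty$ to handle the outer region, deduces $\int t\,\pi_\G(t\mid\bX)\,dt\to 0$ a.s., and concludes via Theorem~\ref{thm1} and Slutsky's theorem. Your version is in fact more careful than the paper's sketch in two respects: your dominating bound holds uniformly over $\|t\|\le\delta\sqrt n$, whereas the paper states its bound only for each fixed $t$; and you correctly note that the Hessian convergence \eqref{conv.Q2} used in the proof of Theorem~\ref{thm2} relies on \textbf{A5}--\textbf{A6}, which are not among that theorem's stated hypotheses.
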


\subsection{Application to GARCH models}
In this subsection, we verify that the conditions and assumptions introduced in the previous subsection are satisfied by the GARCH$(p,q)$ model under standard regularity conditions. Consider the following  GARCH($p,q$) model:
\begin{equation}\label{GARCH}
\begin{split}
X_t &= \sigma_t(\T)\,\epsilon_t,\quad \ep_t\sim\ i.i.d.\ N(0,1),\\
\sigma_t^2(\T) &= \omega + \sum_{i=1}^p \A_i X_{t-i}^2
+ \sum_{j=1}^q \B_j \sigma_{t-j}^2(\T),
\end{split}
\end{equation}
where $\T = (\omega, \A_1, \ldots, \A_p, \B_1, \ldots, \B_q)'$ with $\omega > 0$ and  $\A_i, \B_j \ge 0$. We assume that the process $\{X_t \mid t \in \mathbb{Z}\}$ generated from \eqref{GARCH} with the true parameter $\theta_0$ is strictly stationary and ergodic. It is well known for GARCH models that strict stationarity and ergodicity hold if and only if the top Lyapunov exponent is strictly negative; see, for example, \cite{francq:zakoian:2004}. We further assume that the parameter space $\Theta$ is a compact subset of $(0,\infty)\times[0,\infty)^{p+q}$. 

Let $\{\tilde \sigma_t^2(\T)\}_{t=1}^n$ be the proxy process for the conditional variance process $\{\sigma_t^2(\T)\}_{t=1}^n$, defined recursively by
\begin{eqnarray}\label{tsig}
    \tsig_{t}^2(\T) = \W+\sum_{i=1}^{p}\A_i X_{t-i}^2+\sum_{j=1}^{q}\B_j\tsig_{t-j}^2(\T),
\end{eqnarray}
with suitably chosen initial values for $X_0^2=\cdots=X_{1-p}^2$ and $\tsig_{0}^2=\cdots=\tsig_{1-q}^2$.
Since
$$ \tilde{f}_{\T}(x|\mF_{t-1}) =  \frac{1}{\sqrt{2\pi}\tsig_{t}(\T)} \exp\bigg(-\frac{x^2}{2\tsig_{t}^2(\T)}\bigg),$$
the objective function \eqref{Q} for the model \eqref{GARCH} is given by 
\begin{equation}\label{obj_GARCH}
\begin{aligned}
\widetilde Q_n^{(\G)}(\T)
&=
\left\{
\begin{array}{ll}
\displaystyle
\sum_{t=1}^n
\bigg(\frac{1}{\sqrt{2\pi}\,\tsig_t(\T)}\bigg)^\G
\bigg\{\frac{1}{\G}\exp\!\left(-\dfrac{\G X_t^2}{2\tsig_t^{2}(\T)}\right)
-\Big(\frac{1}{1+\G}\Big)^{\frac{3}{2}}\bigg\}
& \mbox{, $\G>0$,}\\[20pt]
\displaystyle
\sum_{t=1}^n \bigg(-\log \sqrt{2\pi} \tsig_t(\T)-\frac{X_t^2}{2\tsig_t^2(\T)}\bigg)
& \mbox{, $\G=0$.}
\end{array}
\right.
\\[5pt]
&:= \sum_{t=1}^n \tilde q_t^{(\gamma)}(\T).
\end{aligned}
\end{equation}
The corresponding DPD-based posterior is then
\begin{align*}
   \pi_{\G}(\theta\mid\bX)
      & \propto \exp \big(\widetilde Q_n^{(\G)}(\T)  \big) \pi(\theta).
\end{align*}

For Theorem \ref{thm:gamma0} to hold for the GARCH model above, we now check that the conditions concerned with the model are satisfied. Conditions {\bf C1} and {\bf C2} are trivially satisfied, and, in view of Remark \ref{rmk:C3}, a shifted version of condition {\bf C3} is shown to hold by Lemma \ref{lem:GARCH_C3}. Hence, we have the following result.

\begin{prop}
If a prior $\pi(\theta)$ satisfies conditions {\bf C4} and {\bf C5}, then the conclusions of Theorem~\ref{thm:gamma0} hold for the GARCH model defined in \eqref{GARCH}.
\end{prop}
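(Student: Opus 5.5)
The plan is to check that the GARCH model \eqref{GARCH} satisfies each hypothesis of Theorem~\ref{thm:gamma0}, and then apply that theorem directly. \textbf{C4} and \textbf{C5} are assumed for the prior, and $\Theta$ is compact by assumption.

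\textbf{C1} holds because the standard normal density is strictly positive on $\mathbb{R}$. For \textbf{C2}, note that the recursion \eqref{tsig} with fixed initial values makes each $\tsig_t^2(\T)$ a polynomial in $\T$. Since $\omega>0$ on $\Theta$, we have $\tsig_t^2(\T)\ge\omega\ge\underline\omega>0$, where $\underline\omega:=\min_{\Theta}\omega$ exists by compactness. Hence $\tilde f_\T(x\mid\mF_{t-1})$ is continuous in $\T$.

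The substantive step is \textbf{C3}. Following Remark~\ref{rmk:C3}, I would work with the shifted objective $\widetilde Q_{n,K}^{(\G)}(\T)=\widetilde Q_n^{(\G)}(\T)-K_n(\G)$, where
\[
K_n(\G)=n\Big\{\frac1\G-\Big(\frac{1}{1+\G}\Big)^{3/2}\Big\}.
\]
This choice is natural because $(1+\G)^{-3/2}\to1$, so subtracting it absorbs the divergent term and the constant. Write $\ell_t(\T)=\log\tilde f_\T(X_t\mid\mF_{t-1})$, so that $\tilde f_\T^{\G}(X_t\mid\mF_{t-1})=e^{\G\ell_t}$. Each summand of $\widetilde Q_{n,K}^{(\G)}-\sum_t\ell_t$ then becomes
\[
\frac{e^{\G\ell_t}-1-\G\ell_t}{\G}-\Big(\frac{1}{1+\G}\Big)^{3/2}\Big(e^{-\G\log(\sqrt{2\pi}\tsig_t)}-1\Big).
\]
I would bound the two pieces separately:
\begin{itemize}
\item For the first piece, use $|e^{u}-1-u|\le \tfrac{u^2}{2}e^{|u|}$.
\item For the second piece, use $|e^{u}-1|\le|u|e^{|u|}$.
\end{itemize}
This gives a bound of the form $\G\,C\sum_t \sup_{\T}\{\ell_t^2+|\log\tsig_t|\}\,e^{\G\sup_\T|\ell_t|}$. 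The second piece may at first look only $O(\G)$ with no further smallness, but $O(\G)$ is exactly what \textbf{C3} asks for.

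The main obstacle is making these suprema finite and free of $\T$ for a fixed sample, so that they form a random variable $V_n$. This works as follows. For fixed $n$ and fixed data, compactness of $\Theta$ and the recursion give
\[
\underline\omega\le\tsig_t^2(\T)\le M_n<\infty \quad\text{for } t\le n,
\]
with $M_n$ depending only on $X_1,\dots,X_n$, the initial values, and $\sup_\Theta\|\T\|$. (Under $\sum_j\B_j<1$ this is immediate; in general, each $\tsig_t^2$ is a continuous function on a compact set.) Consequently
\[
|\ell_t(\T)|\le \tfrac12\log(2\pi M_n)+\tfrac12|\log\underline\omega|+\frac{X_t^2}{2\underline\omega}=:L_{n,t},
\]
uniformly in $\T$. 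For $\G\le1$ this yields
\[
V_n=C\sum_{t=1}^n\big(L_{n,t}^2+L_{n,t}\big)e^{L_{n,t}},
\]
which is finite, positive, and independent of $\T$. This establishes the shifted \textbf{C3}, which is the content of Lemma~\ref{lem:GARCH_C3}.

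Theorem~\ref{thm:gamma0} then applies to $\widetilde Q_{n,K}^{(\G)}$, which induces the same posterior $\pi_\G$. This gives the total-variation convergence. The uniform convergence of the unnormalized densities holds as well, provided it is read for the shifted objective; the unshifted $\exp(\widetilde Q_n^{(\G)})$ carries the factor $e^{n/\G}$ and so cannot converge.
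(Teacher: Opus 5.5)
Your proposal is correct and follows the paper's route: you note that \textbf{C1} and \textbf{C2} are immediate, and you establish the shifted version of \textbf{C3} (via Remark~\ref{rmk:C3}) as in Lemma~\ref{lem:GARCH_C3}, using that for a fixed sample $\tilde\sigma_t^2(\theta)$ is bounded above and away from zero on the compact $\Theta$. Your differences are cosmetic: your shift $K_n(\gamma)=n\{\gamma^{-1}-(1+\gamma)^{-3/2}\}$ differs from the paper's $n(\gamma^{-1}-1)$ only by a $\theta$-free $O(n\gamma)$ term; your bounds $|e^u-1-u|\le \frac{u^2}{2}e^{|u|}$ and $|e^u-1|\le |u|e^{|u|}$ replace the paper's Taylor expansions with uniform remainders (take $M_n\ge 1$, or use absolute values, so that your $L_{n,t}$ really dominates $|\log(\sqrt{2\pi}\tilde\sigma_t)|$); and your caveat that the first conclusion holds only for the shifted objective is correct.
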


The standard conditions usually imposed for the GARCH model above are as follows:
 \begin{enumerate}
\item[\bf G1.] $\displaystyle \sup_{\T\in\Theta}\sum_{j=1}^{q}\beta_j <1.$
\item[\bf G2.] If $q>0\,$, ${\mathcal{A}}_{\theta_0}(z)$ and
${\mathcal{B}}_{\theta_0}(z)$ have no common root,
${\mathcal{A}}_{\theta_0}(1) \neq 1$, and $\alpha_{0p}+\beta_{0q}
\neq 0$, where ${\mathcal{A}}_{\T}(z)=\sum_{i=1}^p \A_i\,z^i$
and ${\mathcal{B}}_{\T}(z)=1-\sum_{j=1}^q\beta_j\,z^j$.
(Conventionally, ${\mathcal{A}}_{\T}(z)=0$ if $p=0$ and
${\mathcal{B}}_{\T}(z)=1$ if $q=0$.)
\item[\bf G3.] $\theta_0$ is in the interior of $\Theta$.
\end{enumerate}

Theorem \ref{thm1} for the GARCH model above has already been established in \cite{lee2009minimum}. Thus, it remains to verify that assumptions {\bf A1--A7} in Subsections \ref{sub:asymp} are satisfied under conditions {\bf G1--G3} in order to apply Theorems \ref{thm2} and \ref{thm3}. 

The function $q_t^{(\gamma)}(\T)$ corresponding to $\tilde q_t^{(\gamma)}(\T)$ is defined by replacing $\tilde \sigma_t(\T)$ with $\sigma_t(\T)$ in \eqref{GARCH}. It is straightforward to see that  $q_t^{(\gamma)}(\T)$ is continuous in $\T$. Condition {\bf G1} ensures stationarity and ergodicity of the process $\{\sigma_t(\theta)\mid t\in\mathbb{Z}\}$, which implies the second part of assumption {\bf A1}. Moreover, noting that
\[ \big|q_t^{(\gamma)}(\T)\big| \le \left(\frac{1}{\sqrt{2\pi}}\right)^\gamma \left\{\frac{1}{\gamma}+\Big(\frac{1}{1+\gamma}\Big)^{3/2}\right\} \sup_{\T\in\Theta}\left(\frac{1}{\omega}\right)^{\gamma/2}\]
and that $\omega$ is bounded away from zero due to the assumption of the compactness of $\Theta\subset(0,\infty)\times[0,\infty)^{p+q}$, we can see that the first part of assumption {\bf A2} holds. The second part is proved in \cite{lee2009minimum} under conditions {\bf G1} and {\bf G2}. With the additional condition {\bf G3}, it is shown in \cite{lee2009minimum} and \cite{song2021test} that assumptions{\bf A3--A5} and  {\bf A6} hold, respectively.
Finally, assumption {\bf A7} is established in Lemma 12 of \cite{song2021sequential}. We therefore obtain the following proposition.

\begin{prop}\label{prop:GARCH_asymp}
Suppose that the GARCH$(p,q)$ model \eqref{GARCH} satisfies conditions {\bf G1--G3}. If a prior $\pi(\theta)$ is positive and continuous at $\T_0$, then
the asymptotic results stated in Theorems \ref{thm2} and  \ref{thm3} hold for the GARCH$(p,q)$ model.
\end{prop}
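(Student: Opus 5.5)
The proposition follows by verifying, one at a time, that assumptions \textbf{A1}--\textbf{A7} of Subsection~\ref{sub:asymp} hold for the GARCH$(p,q)$ model \eqref{GARCH} under \textbf{G1}--\textbf{G3}. Theorems~\ref{thm2} and~\ref{thm3} can then be applied directly. The prior condition in Theorem~\ref{thm2} (positive and continuous at $\T_0$) is exactly what is assumed here. Theorem~\ref{thm3} additionally requires $\int\|\T\|\pi(\T)\,d\T<\infty$, and this is automatic. Indeed, $\Theta$ is compact, so $\|\T\|$ is bounded on the support of the prior, and hence $\int\|\T\|\pi(\T)\,d\T\le \sup_{\Theta}\|\T\|<\infty$.

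The verification proceeds in the order of the assumptions.

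\textbf{A1.} Continuity of $q_t^{(\G)}$ in $\T$ is immediate from \eqref{obj_GARCH}, once $\tsig_t$ is replaced by $\sigma_t$. The latter is continuous in $\T$ because, under \textbf{G1}, it has the expansion $\sigma_t^2(\T)=\W/\mathcal{B}_\T(1)+\sum_{k\ge1}c_k(\T)X_{t-k}^2$, whose coefficients decay geometrically and uniformly on $\Theta$. The same representation shows that $\sigma_t^2(\T)$ is a measurable function of $(X_{t-1},X_{t-2},\dots)$. Hence $q_t^{(\G)}(\T)$ is a measurable function of $(X_t,X_{t-1},\dots)$, and it inherits strict stationarity and ergodicity from $\{X_t\}$.

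\textbf{A2.} The first part follows from the displayed bound on $|q_t^{(\G)}(\T)|$, which uses $\inf_\Theta\W>0$. The second part, the a.s.\ negligibility of $n^{-1}\sum_t\sup_\Theta|q_t-\tilde q_t|$, rests on the exponential bound $\sup_\Theta|\sigma_t^2-\tsig_t^2|\le K\rho^t$ with $\rho<1$, as established in \cite{lee2009minimum}. Combined with a mean value argument on $\tilde q_t$ as a function of $\sigma_t^2$, whose derivative is bounded because $\sigma_t^2\ge\W$, this yields the bound by Ces\`aro summation.

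\textbf{A3--A5.} For the martingale difference property \textbf{A3}, compute $\pa_\T q_t^{(\G)}(\T_0)$. It equals $\pa_\T\sigma_t^2(\T_0)$ times a function of $\ep_t$ alone, and that function has mean zero under $N(0,1)$, since $\T_0$ maximizes the population DPD objective. As $\pa_\T\sigma_t^2(\T_0)$ is $\mathcal{F}_{t-1}$-measurable, the conditional mean given $\mathcal{F}_{t-1}$ is zero.

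For \textbf{A4} and \textbf{A5}, use the same exponential-decay bounds applied to $\pa_\T\tsig_t^2$ and $\paa\tsig_t^2$. These require \textbf{G3}, so that derivatives exist in a neighbourhood of $\T_0$, and then give the results as in \cite{lee2009minimum}.

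\textbf{A6.} The moment bound $\E\sup_{N_2(\T_0)}\|\paa q_t^{(\G)}\|<\infty$ is the step I expect to be the main obstacle, since GARCH processes need not have finite moments. The plan is to follow \cite{song2021test}. First, note that the DPD weights $\sigma_t^{-\G}\exp(-\G X_t^2/2\sigma_t^2)$ and their polynomial multiples in $X_t^2/\sigma_t^2$ are bounded. The second derivative is then controlled by terms of the form $\|\pa_\T\sigma_t^2/\sigma_t^2\|^2$ and $\|\paa\sigma_t^2/\sigma_t^2\|$, and these ratios have all moments finite uniformly on a small neighbourhood, by the standard Francq--Zako\"ian argument.

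\textbf{A7.} Existence of $\mathcal{I}_\G$ and $\mathcal{J}_\G$ follows from the moment bounds above. Positive definiteness of $\mathcal{J}_\G$ reduces to showing that $\pa_\T\sigma_t^2(\T_0)$ has no a.s.\ linear dependence. This is exactly where the identifiability condition \textbf{G2} enters, as in Lemma 12 of \cite{song2021sequential}.

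With \textbf{A1}--\textbf{A7} in place and the prior conditions checked above, Theorems~\ref{thm2} and~\ref{thm3} apply and give the proposition.
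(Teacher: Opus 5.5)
Your proposal is correct and follows essentially the same route as the paper: verify \textbf{A1}--\textbf{A7} for the GARCH$(p,q)$ model under \textbf{G1}--\textbf{G3}, relying on \cite{lee2009minimum}, \cite{song2021test} and Lemma 12 of \cite{song2021sequential}, and then invoke Theorems~\ref{thm2} and~\ref{thm3}. Your step sketches are sound, and your observation that the extra hypothesis $\int\|\T\|\pi(\T)\,d\T<\infty$ of Theorem~\ref{thm3} is automatic on the compact $\Theta$ fills in a point the paper leaves implicit (one minor precision: in \textbf{A3} the score at $\T_0$ also carries the $\mathcal{F}_{t-1}$-measurable factor $\frac12(2\pi)^{-\G/2}\sigma_t^{-2-\G}(\T_0)$, which does not affect your argument).
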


The posterior distribution induced by the DPD-based objective function is analytically intractable, in that posterior quantities of interest cannot be evaluated in closed form. Posterior inference is therefore carried out using Markov chain Monte Carlo (MCMC) methods. Specifically, we recommend using the Hamiltonian Monte Carlo (HMC) algorithm with its adaptive variant, the No-U-Turn Sampler, to generate samples from the DPD-based posterior distribution.

The HMC exploits gradient information of the log-posterior, $\nabla_{\theta} \log \pi_{\gamma}(\theta \mid \bX)$, to efficiently explore the target distribution by simulating Hamiltonian dynamics (see \cite{hoffman2014no}). This approach is particularly well suited for GARCH-type models, where the conditional variance recursions \eqref{tsig} induce complex and nonlinear dependencies among the parameters, which may lead to slow mixing in standard MCMC schemes such as random-walk Metropolis--Hastings.

Let $\{\T_{\G}^{(1)},\ldots,\T_{\G}^{(m)}\}$ denote posterior samples generated by the HMC algorithm. These samples are used to approximate posterior quantities of interest, including Bayes estimators such as the posterior mean
\[
\hat{\T}_{\G,n}^{EDPE}
= \int \T \pi_{\G}(\T \mid \mathbf{X}_n)\, d\T
\approx \frac{1}{m}\sum_{i=1}^m \T_{\G}^{(i)}.
\]
The resulting posterior samples and their Monte Carlo summaries are subsequently used for posterior inference, convergence diagnostics, and performance evaluation in the following simulation study.

\section{Simulation study}\label{Simulation study}
In this section, we investigate the performance of the proposed robust Bayesian procedure in comparison with the conventional Bayesian method. Specifically, we examine the finite-sample behavior of the EDPE and the EOPE under the following GARCH$(1,1)$ model:
\begin{equation}\label{garch11}
\begin{split}
X_t &= \sigma_t(\T)\,\epsilon_t, \qquad \epsilon_t \sim i.i.d.\ N(0,1),\\
\sigma_t^2(\T) &= \W + \A X_{t-1}^2 + \B \sigma_{t-1}^2(\T),
\end{split}
\end{equation}
where $\T=(\W,\A,\B)$ with $\W>0$ and $\A,\B\ge0$.

\begin{table}[t]
\tabcolsep=5pt
\renewcommand{\arraystretch}{1.4}
\centering
{\footnotesize
\caption{Sample means of the EOPEs and the EDPEs with total scaled RMSEs in parentheses for $\theta_1=(1,0.2,0.4)$ under the uncontaminated setting}
\label{tab:uncontaminated_theta1}
\begin{tabular}{ccccccccccc}
\hline
 & &   & \multicolumn{8}{c}{EDPE $(\gamma)$} \\
\cmidrule(ll){4-10}
$n$ & $\theta_1$ 
& EOPE 
& 0.05 & 0.10 & 0.20 & 0.30 & 0.50 & 0.75 & 1.00 \\
\hline
\multirow{4}{*}{500}
& $\omega$ & 1.187 & 1.194 & 1.201 & \textcolor{red}{1.215} & 1.224 & 1.234 & 1.245 & 1.265 \\
& $\alpha$ & 0.217 & 0.220 & 0.223 & \textcolor{red}{0.231} & 0.240 & 0.265 & 0.298 & 0.326 \\
& $\beta$  & 0.318 & 0.314 & 0.311 & \textcolor{red}{0.304} & 0.299 & 0.291 & 0.286 & 0.285 \\
&           & (0.975) & (0.956) & (0.941) & \textcolor{red}{(0.932)} & (0.939) & (0.991) & (1.109) & (1.228) \\
\hline
\multirow{4}{*}{1000}
& $\omega$ & 1.127 & 1.141 & \textcolor{red}{1.154} & 1.178 & 1.195 & 1.224 & 1.236 & 1.248 \\
& $\alpha$ & 0.208 & 0.210 & \textcolor{red}{0.212} & 0.215 & 0.220 & 0.233 & 0.259 & 0.290 \\
& $\beta$  & 0.346 & 0.340 & \textcolor{red}{0.334} & 0.323 & 0.315 & 0.302 & 0.294 & 0.288 \\
&           & (0.747) & (0.738) & \textcolor{red}{(0.736)} & (0.743) & (0.757) & (0.809) & (0.900) & (1.034) \\
\hline
\multirow{4}{*}{2000}
& $\omega$ & \textcolor{red}{1.080} & 1.090 & 1.103 & 1.127 & 1.152 & 1.191 & 1.219 & 1.233 \\
& $\alpha$ & \textcolor{red}{0.205} & 0.206 & 0.207 & 0.210 & 0.213 & 0.220 & 0.234 & 0.257 \\
& $\beta$  & \textcolor{red}{0.365} & 0.361 & 0.355 & 0.344 & 0.332 & 0.314 & 0.301 & 0.293 \\
&           & \textcolor{red}{(0.569)} & (0.574) & (0.581) & (0.602) & (0.629) & (0.683) & (0.759) & (0.860) \\
\hline
\end{tabular}}
\end{table}

We specify a prior distribution for $\T$ such that the joint prior factorizes as
\[
\pi(\T)=\pi(\W)\,\pi(\A,\B).
\]
Although compactness of the parameter space is assumed for theoretical development in the preceding section, we adopt a truncated normal prior for $\omega$ in this simulation study:
\[
\omega \sim N_{(0,\infty)}(\mu_w,\sigma_w^2),
\]
where $\mu_w$ and $\sigma_w^2$ denote fixed hyperparameters. This prior can be regarded as a weakly informative prior with effectively compact support, since the Gaussian tail decays rapidly and assigns negligible prior mass to extremely large values.

For the ARCH and GARCH coefficients $\A$ and $\B$, rather than relying on the general stationarity and ergodicity condition based on the negativity of the Lyapunov exponent, we impose, for simplicity, the sufficient condition $\A+\B<1$, which guarantees both strict stationarity and ergodicity of the GARCH$(1,1)$ process. Accordingly, we assign a uniform prior over the stationary region,
\[
\pi(\A,\B)\propto I\big(\A\ge0,\ \B\ge0,\ \A+\B<1\big),
\]
where $I(\cdot)$ denotes the indicator function.
Combining these choices, the joint prior distribution is given by
\[
\pi(\T)\propto \exp\!\bigg(-\frac{(\W-\mu_w)^2}{2\sigma_w^2}\bigg)
I(\W>0)\,I(\A\ge0,\B\ge0,\A+\B<1).
\]

\begin{table}[!t]
\tabcolsep=5pt
\renewcommand{\arraystretch}{1.4}
\centering
{\footnotesize
\caption{Sample means of the EOPEs and the EDPEs with total scaled RMSEs in parentheses for $\theta_2=(1,0.15,0.8)$ under the uncontaminated setting}
\label{tab:uncontaminated_theta2}
\begin{tabular}{cccccccccc}
\hline
\multicolumn{2}{c}{} 
& \multicolumn{1}{c}{} 
& \multicolumn{7}{c}{EDPE ($\G$)} \\ 
\cmidrule(ll){4-10}
$n$ & $\theta_2$
& EOPE
& 0.05 & 0.10 & 0.20 & 0.30 & 0.50 & 0.75 & 1.00 \\ \hline

\multirow{4}{*}{500}
& $\W$
& \tcr{1.469}
& 1.504 & 1.539 & 1.602 & 1.656 & 1.721 & 1.750 & 1.736 \\

& $\A$
& \tcr{0.171}
& 0.175 & 0.179 & 0.188 & 0.198 & 0.222 & 0.256 & 0.291 \\

& $\B$
& \tcr{0.753}
& 0.748 & 0.742 & 0.730 & 0.717 & 0.688 & 0.640 & 0.583 \\

& 
& \tcr{(0.984)}
& (1.003) & (1.026) & (1.100) & (1.193) & (1.398) & (1.680) & (1.957) \\ \hline

\multirow{4}{*}{1000}
& $\W$
& \tcr{1.262}
& 1.297 & 1.335 & 1.414 & 1.495 & 1.623 & 1.719 & 1.749 \\

& $\A$
& \tcr{0.162}
& 0.165 & 0.168 & 0.174 & 0.182 & 0.201 & 0.226 & 0.253 \\

& $\B$
& \tcr{0.774}
& 0.770 & 0.765 & 0.755 & 0.744 & 0.719 & 0.685 & 0.644 \\

& 
& \tcr{(0.644)}
& (0.663) & (0.692) & (0.774) & (0.883) & (1.117) & (1.396) & (1.645) \\ \hline

\multirow{4}{*}{2000}
& $\W$
& \tcr{1.140}
& 1.162 & 1.189 & 1.253 & 1.325 & 1.479 & 1.630 & 1.719 \\

& $\A$
& \tcr{0.156}
& 0.158 & 0.160 & 0.164 & 0.170 & 0.186 & 0.208 & 0.229 \\

& $\B$
& \tcr{0.786}
& 0.784 & 0.781 & 0.773 & 0.764 & 0.742 & 0.713 & 0.683 \\

& 
& \tcr{(0.423)}
& (0.438) & (0.460) & (0.530) & (0.620) & (0.850) & (1.154) & (1.405) \\ \hline

\end{tabular}}
\end{table}

Given the observations $\bX=(X_1,\cdots,X_n)$ generated from \eqref{garch11}, the ordinary posterior based on the above prior  is given by
\begin{align*}
    \pi(\T\mid\bX) 
    & \propto \prod_{t=1}^{n}\frac{1}{\tsig_t(\T)}\exp\bigg(-\frac{X_t^2}{2\tsig_t^2(\T)}\bigg) \pi(\T),
\end{align*}
and the DPD-based posterior is given as follows:
\begin{align*}
    \pi_{\G}(\T\mid\bX) 
        & \propto \prod_{t=1}^{n}\exp \bigg[ \bigg(\frac{1}{\sqrt{2\pi} \tsig_t(\T)}\bigg)^\G \bigg\{\frac{1}{\G} \exp \left( -\dfrac{\G X_t^2}{2\tsig_{t}^{2}(\T)} \right)-\Big(\frac{1}{1+\G}\Big)^{\frac{3}{2}}\bigg\} \bigg] \pi(\T).
\end{align*}
As initial values for the proxy process $\{\tilde\sigma_t^2(\T)\}_{t=1}^n$, we set $X^2_0=\tilde\sigma_t^2(\T)=X_1^2$.

In this simulation, we consider the following two parameter configurations:
\[
\theta_1=(1,0.2,0.4) \quad \text{and} \quad \theta_2=(1,0.15,0.8),
\]
where $\A+\B$ in $\theta_1$ corresponds to a moderately persistent GARCH process, while $\theta_2$ represents a highly persistent process with $\A+\B$ close to one, a feature commonly observed in empirical applications. The sample sizes $n$ are set to $500$, $1000$, and $2000$, and we generate sample paths of length $n+1000$ and discard the first $1000$ observations to mitigate initialization effects.

\begin{figure}[!t]
    \centering
    \includegraphics[width=1\linewidth]{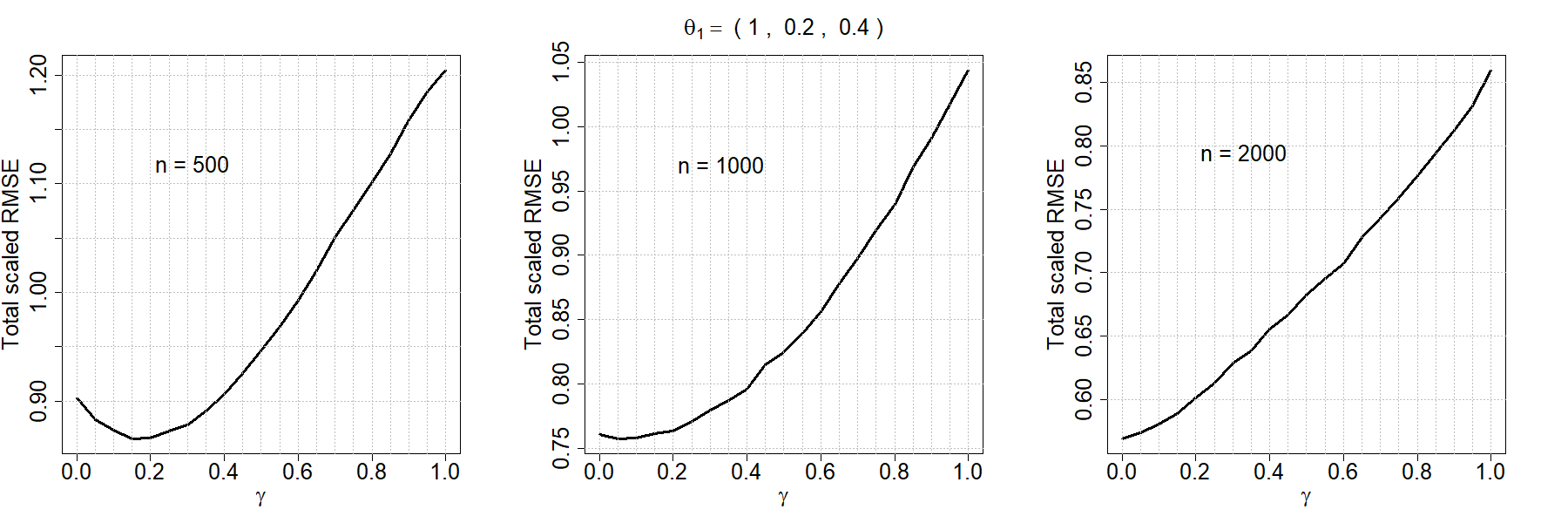}
     \includegraphics[width=1\linewidth]{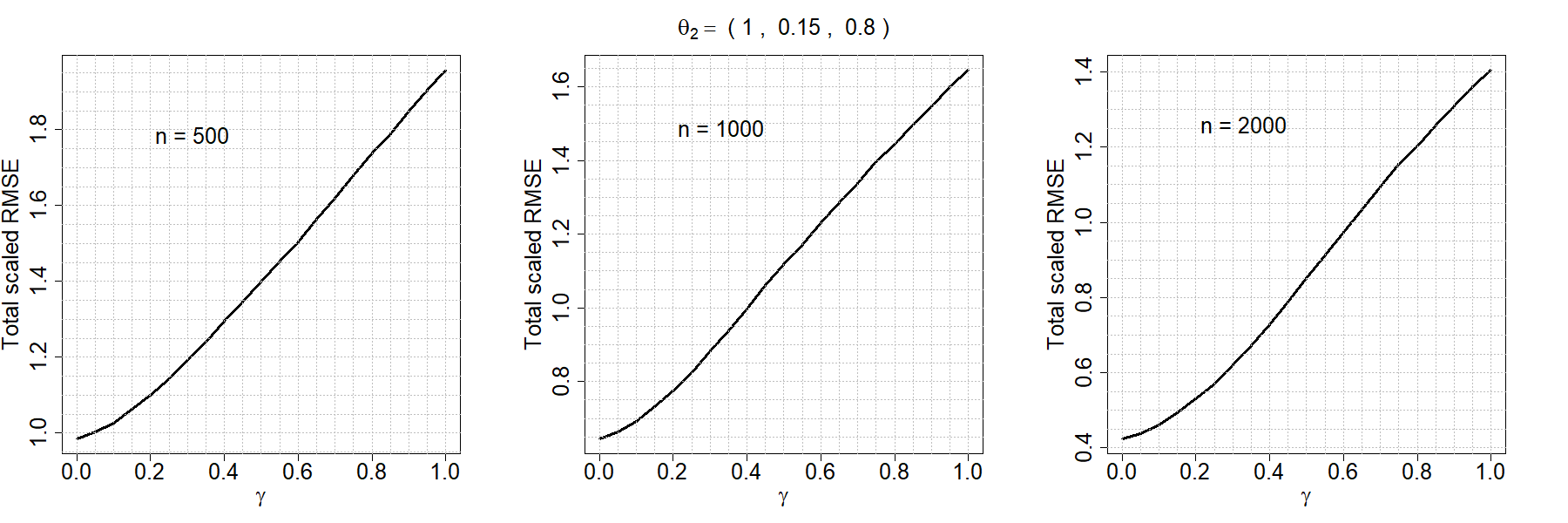}
    \caption{Total scaled RMSE of posterior mean with respect to $\G$ under the uncontaminated setting}
    \label{fig:mse_wrt_alpha_par1_no}
\end{figure}

Recall that the EOPE and the EDPE are defined as the posterior means under the ordinary and DPD-based posterior distributions, respectively. Since these posterior expectations do not admit closed-form expressions, they are approximated by the empirical means of the corresponding MCMC samples.
For the reasons stated in the previous subsection, posterior inference is carried out using HMC with the adaptive No-U-Turn Sampler. For each sample path,  four parallel chains are run with $500$ warm-up iterations and $1000$ sampling iterations per chain to obtain posterior samples, from which the EOPE and EDPE are computed. All computations are performed using the \texttt{Stan} probabilistic programming language via the \texttt{cmdstanr} interface in \texttt{R}. Convergence is assessed using the potential scale reduction factor $\hat R$ and the effective sample size, and all chains exhibit satisfactory convergence across the considered settings. 

For each simulation setting, this procedure is repeated $R=200$ times. The averages of the resulting EOPEs and EDPEs are reported in
Tables~\ref{tab:uncontaminated_theta1} and \ref{tab:uncontaminated_theta2}. To compare the performance of the estimators, we employ the total scaled RMSE,
defined as in \citet{bai2014efficient},
\[
\text{Total scaled RMSE}
= \sum_{j=1}^{3}\frac{1}{|\T_{0j}|} \sqrt{\frac{1}{R}\sum_{r=1}^{R}\Big(\hat{\T}_j^{(r)}-\T_{0j}\Big)^2},
\]
where $\T_{0j}$ denotes the $j$th component of the true parameter vector $\T_0$, and $\hat{\theta}_j^{(r)}$ denotes the Monte Carlo estimate of the posterior mean of the $j$th parameter based on the $r$th replication.  As advocated by \citet{bai2014efficient}, this criterion provides a single, scale-free summary measure of overall estimation accuracy, making it particularly useful when comparing estimators in models involving parameters with substantially different magnitudes.

\begin{table}[!t]
\tabcolsep=5pt
\renewcommand{\arraystretch}{1.4}
\centering
{\footnotesize
\caption{Sample means of the EOPEs and the EDPEs with total scaled RMSEs in parentheses for $\theta_1=(1,0.2,0.4)$ under the  1\% contamination  setting}
\label{table:theta1_IO_t}
\begin{tabular}{cccccccccccc}
\hline
\multicolumn{2}{c}{} 
& \multicolumn{3}{c}{EOPE ($f_\ep$)} 
& \multicolumn{7}{c}{EDPE ($\G$)} \\ 
\cmidrule(lr){3-5}\cmidrule(lr){6-12}
$n$ & $\theta_1$
& $N(0,1)$ & $t(5)$ & $t(7)$
& 0.05 & 0.10 & 0.20 & 0.30 & 0.50 & 0.75 & 1.00 \\ \hline

\multirow{4}{*}{500}
& $\W$
& 1.431 & 1.441 & 1.339
& 1.316 & 1.247 & 1.204 & \tcr{1.211} & 1.235 & 1.261 & 1.286 \\

& $\A$
& 0.301 & 0.295 & 0.272
& 0.271 & 0.253 & 0.244 & \tcr{0.249} & 0.273 & 0.306 & 0.332 \\

& $\B$
& 0.356 & 0.351 & 0.346
& 0.344 & 0.334 & 0.320 & \tcr{0.309} & 0.295 & 0.287 & 0.285 \\

& & (1.894) & (1.430) & (1.265)
& (1.421) & (1.120) & (0.944) & \tcr{(0.940)} & (1.022) & (1.162) & (1.279) \\ \hline

\multirow{4}{*}{1000}
& $\W$
& 1.410 & 1.385 & 1.281
& 1.260 & 1.180 & \tcr{1.139} & 1.150 & 1.192 & 1.232 & 1.259 \\

& $\A$
& 0.287 & 0.280 & 0.257
& 0.255 & 0.236 & \tcr{0.226} & 0.229 & 0.244 & 0.271 & 0.302 \\

& $\B$
& 0.367 & 0.372 & 0.369
& 0.366 & 0.361 & \tcr{0.349} & 0.337 & 0.316 & 0.299 & 0.290 \\

& & (1.597) & (1.156) & (0.979)
& (1.105) & (0.828) & \tcr{(0.693)} & (0.698) & (0.783) & (0.938) & (1.095) \\ \hline

\multirow{4}{*}{2000}
& $\W$
& 1.391 & 1.370 & 1.262
& 1.239 & 1.152 & \tcr{1.103} & 1.110 & 1.156 & 1.208 & 1.239 \\

& $\A$
& 0.272 & 0.271 & 0.248
& 0.244 & 0.227 & \tcr{0.216} & 0.217 & 0.227 & 0.244 & 0.269 \\

& $\B$
& 0.382 & 0.379 & 0.378
& 0.378 & 0.374 & \tcr{0.365} & 0.355 & 0.333 & 0.310 & 0.297 \\

& & (1.333) & (1.028) & (0.838)
& (0.916) & (0.680) & \tcr{(0.551)} & (0.554) & (0.637) & (0.772) & (0.919) \\ \hline

\end{tabular}}
\end{table}

Tables \ref{tab:uncontaminated_theta1} and \ref{tab:uncontaminated_theta2} report the simulation results under the uncontaminated setting for $\theta_1$ and $\theta_2$, respectively. 
For $\theta_2$, the EOPE attains the smallest total scaled RMSE across all considered sample sizes, which is consistent with the efficiency of standard Bayesian inference under correct model specification. 
For $\theta_1$, however, the minimum total scaled RMSE occurs at a small positive tuning parameter in finite samples: it is achieved at $\gamma=0.2$ when $n=500$ and at $\gamma=0.1$ when $n=1000$, while the EOPE becomes best at $n=2000$. 
Overall, the total scaled RMSE tends to increase with $\gamma$, but the $\theta_1$ results indicate that, at smaller $n$, the EDPE with a small $\gamma$ can perform comparably to (or slightly better than) the EOPE, which may reflect finite-sample and Monte Carlo variability rather than a genuine efficiency improvement. As $n$ increases from 500 to 2000, the RMSE values decrease for all estimators, and the advantage shifts toward the EOPE, supporting the expected asymptotic efficiency under the true model. Figure \ref{fig:mse_wrt_alpha_par1_no} visually corroborates these patterns.

\begin{table}[!t]
\tabcolsep=5pt
\renewcommand{\arraystretch}{1.4}
\centering
{\footnotesize

\caption{Sample means of the EOPEs and the EDPEs with total scaled RMSEs in parentheses for $\theta_2=(1,0.15,0.8)$ under the  1\% contamination  setting}
\label{table:theta2_IO_t}
\begin{tabular}{cccccccccccc}
\hline
\multicolumn{2}{c}{} 
& \multicolumn{3}{c}{EOPE} 
& \multicolumn{7}{c}{EDPE ($\G$)} \\ 
\cmidrule(lr){3-5}\cmidrule(lr){6-12}
$n$ & $\theta_2$
& $N(0,1)$ & $t(5)$ & $t(7)$
& 0.05 & 0.10 & 0.20 & 0.30 & 0.50 & 0.75 & 1.00 \\ \hline

\multirow{4}{*}{500}
& $\W$
& 1.918 & 1.857 & 1.718
& 1.662 & 1.544 & \tcr{1.497} & 1.533 & 1.619 & 1.668 & 1.668 \\

& $\A$
& 0.189 & 0.196 & 0.195
& 0.186 & 0.184 & \tcr{0.186} & 0.195 & 0.220 & 0.256 & 0.295 \\

& $\B$
& 0.774 & 0.776 & 0.769
& 0.769 & 0.763 & \tcr{0.751} & 0.738 & 0.706 & 0.653 & 0.588 \\

& & (1.698) & (1.361) & (1.261)
& (1.279) & (1.064) & \tcr{(0.955)} & (1.015) & (1.250) & (1.585) & (1.911) \\ \hline

\multirow{4}{*}{1000}
& $\W$
& 1.798 & 1.699 & 1.532
& 1.506 & 1.376 & \tcr{1.328} & 1.374 & 1.514 & 1.631 & 1.679 \\

& $\A$
& 0.188 & 0.195 & 0.190
& 0.182 & 0.176 & \tcr{0.175} & 0.181 & 0.200 & 0.226 & 0.255 \\

& $\B$
& 0.783 & 0.784 & 0.781
& 0.781 & 0.777 & \tcr{0.770} & 0.760 & 0.735 & 0.699 & 0.653 \\

& & (1.432) & (1.144) & (0.977)
& (1.004) & (0.782) & \tcr{(0.673)} & (0.731) & (0.976) & (1.287) & (1.576) \\ \hline

\multirow{4}{*}{2000}
& $\W$
& 1.620 & 1.538 & 1.367
& 1.346 & 1.234 & \tcr{1.191} & 1.226 & 1.364 & 1.537 & 1.645 \\

& $\A$
& 0.184 & 0.192 & 0.184
& 0.177 & 0.169 & \tcr{0.165} & 0.168 & 0.183 & 0.206 & 0.229 \\

& $\B$
& 0.796 & 0.793 & 0.792
& 0.793 & 0.790 & \tcr{0.784} & 0.778 & 0.759 & 0.729 & 0.696 \\

& & (1.132) & (0.939) & (0.751)
& (0.765) & (0.572) & \tcr{(0.475)} & (0.510) & (0.710) & (1.033) & (1.315) \\ \hline

\end{tabular}}
\end{table}

To examine the robustness of the proposed method against data contamination, we incorporate innovation outliers into the model. Specifically, instead of the standard normal innovations, we generate contaminated innovations according to the following scheme:
\begin{eqnarray*}\label{io}
\epsilon_t = \epsilon_t^{(0)} + s \cdot p_t \cdot \text{sign}(\epsilon_t^{(0)}),
\end{eqnarray*}
where $\epsilon_t^{(0)}$ and $p_t$ are sequences of independent random variables drawn from $N(0,1)$ and $\text{Bernoulli}$ (0.01), respectively. The scaling factor is set to $s=5$, which determines the magnitude of the contamination. Consequently, with a probability of $1\%$, the innovation is shifted by $+5$ or $-5$ in the direction of its sign, mimicking extreme shocks to the system. 

In the presence of such heavy-tailed data or outliers, it is a common practice in Bayesian inference to adopt a likelihood function based on a heavy-tailed distribution, such as the Student's $t$-distribution, rather than the Gaussian distribution. To facilitate a comprehensive comparison, We compute the EOPEs using Student's $t$ likelihoods with 5 and 7 degrees of freedom, in addition to the standard Gaussian EOPE.

\begin{figure}[!t]
    \centering
    \includegraphics[width=1\linewidth]{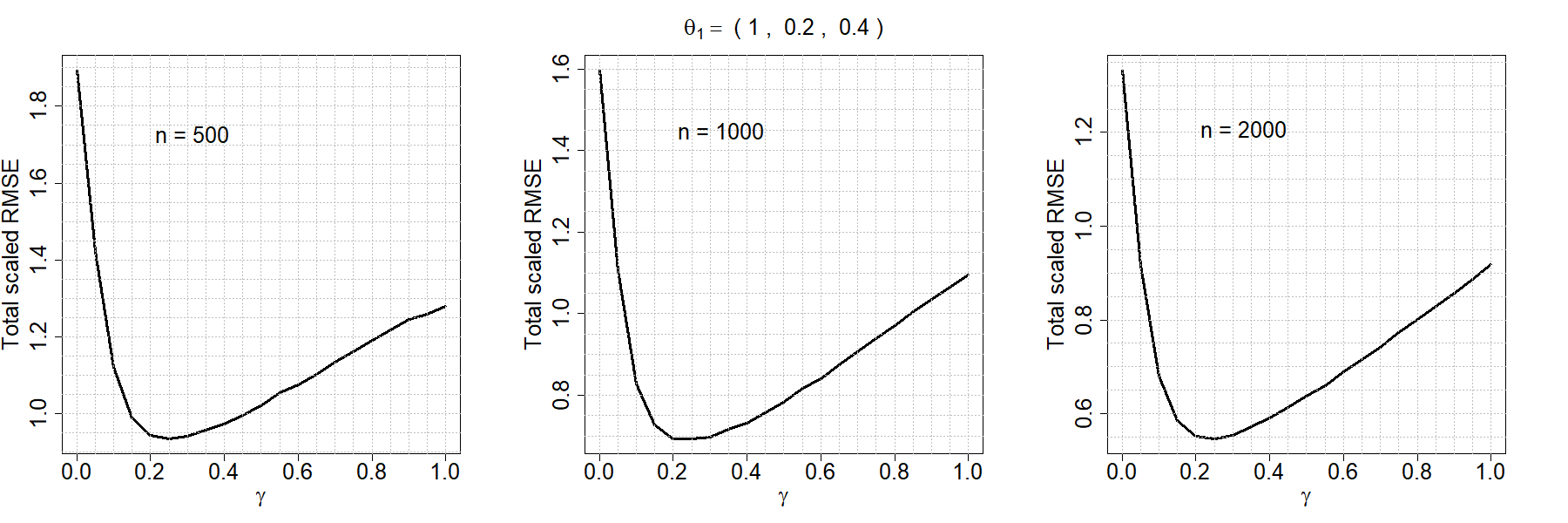}
    \includegraphics[width=1\linewidth]{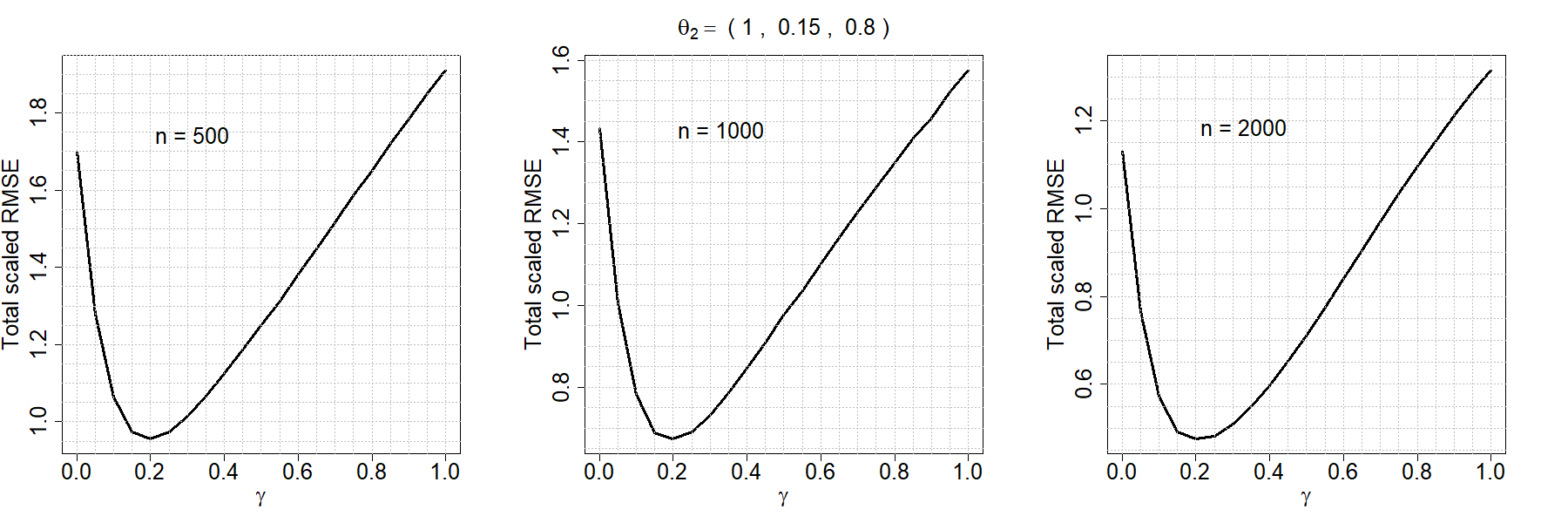}
\caption{Total scaled RMSE of posterior mean with respect to $\G$ under the 1\% contamination  setting}
    \label{fig:mse_wrt_alpha_par1_io}
\end{figure}

Tables \ref{table:theta1_IO_t} and \ref{table:theta2_IO_t} summarize the simulation results for $\theta_1$ and $\theta_2$ under the $1\%$  contamination. Several key observations can be made. First, the performance of the standard Gaussian EOPE ($N(0,1)$) deteriorates significantly compared to the uncontaminated case, yielding the highest total scaled RMSEs across all settings. Second, while the EOPEs based on Student's $t$-distributions provide a marked improvement over the Gaussian estimator by accommodating the heavy tails, they are still outperformed by the proposed DPD-based estimators. Specifically, the EDPEs with tuning parameters around $\gamma=0.2$ to $\gamma=0.3$ consistently achieve the lowest RMSE values (highlighted in red). This indicates that the proposed method offers superior robustness against outliers compared to the conventional strategy of simply switching to a $t$-distribution. Figure \ref{fig:mse_wrt_alpha_par1_io} further illustrates this advantage; unlike the monotonic increase observed in the uncontaminated case, the RMSE curves now exhibit a convex (U-shaped) pattern, suggesting that a small but positive $\gamma$ can best balance robustness and efficiency under contamination.

In summary, the simulation study highlights the practical utility of the proposed robust Bayesian method. Under the uncontaminated setting, the DPD-based estimator with a small $\gamma$ exhibits performance comparable to the efficient EOPE, particularly as the sample size increases. However, in the presence of outliers, the standard EOPE fails to provide reliable estimates, and even the strategy of adopting heavy-tailed likelihoods (e.g., Student's $t$) yields suboptimal results compared to the proposed method. The EDPE with a properly chosen tuning parameter (e.g., $\gamma \approx 0.2$) successfully achieves a balance between robustness and efficiency, offering a substantial reduction in RMSE by effectively down-weighting the influence of outliers. These findings confirm that the proposed method serves as a safe and robust alternative for GARCH modeling, especially when the data appear to contain outliers or anomalies.

\section{Real data analysis}\label{Real data analysis}

In this section, we analyze Bitcoin price data to illustrate the proposed method. Let $\{X_t\}$ denote the daily closing prices and let $\{r_t\}$ be the corresponding log-return series, defined by $r_t = 100 \log(X_t/X_{t-1})$. The dataset covers the period from September 2021 to December 2024 (totally 1,218 observations), as displayed in Figure \ref{fig:Data}. We use the returns from September 2021 to August 2024 for model estimation and reserve the remaining observations for out-of-sample evaluation.

\begin{figure}[!t]
\centering
\includegraphics[width=0.99\linewidth]{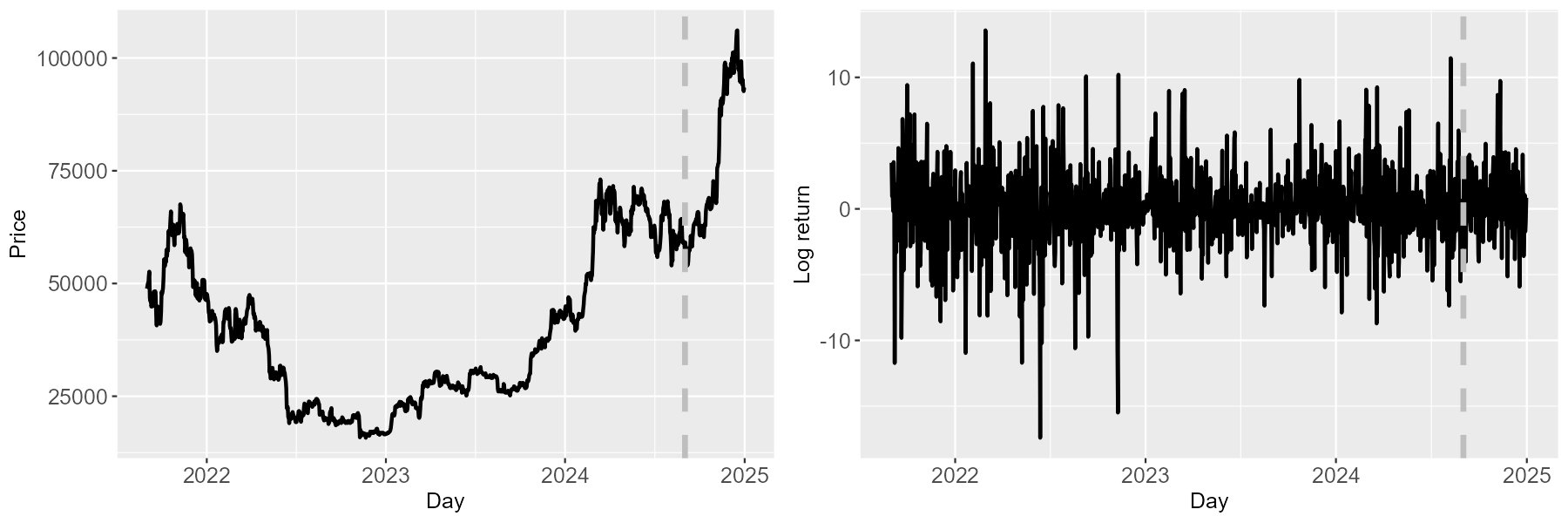}
\caption{Daily closing prices (L) and log returns (R) of BTC-USD from September 2021 to December 2024; the vertical dashed line marks the start of the out-of-sample evaluation period}

\label{fig:Data}
\end{figure}

Preliminary analysis using the Ljung-Box test indicates that, for the in-sample log returns, the smallest $p$-value among lags 1 to 30 is $0.1241$ (at lag 25), suggesting no significant linear serial correlation. In contrast, for the squared log returns, the largest $p$-value is $0.0002$ (at lag 6), providing strong evidence of conditional heteroskedasticity. We also observe typical volatility clustering in the return series. Accordingly, we fit the GARCH$(1,1)$ model defined in \eqref{garch11} with parameter $\T=(\W,\A,\B)$ to the return series $\{r_t\}$, which is a standard and parsimonious specification commonly adopted in empirical volatility modeling.

As shown in the right panel of Figure \ref{fig:Data}, the series also contains several outlying observations that may adversely affect ordinary Bayesian inference. To address this issue, we estimate the model using the EDPE proposed in this study. For comparison, we also consider the conventional EOPE.
The two approaches are evaluated by comparing their one-step-ahead forecasts of the conditional variance and the associated 95\% Value-at-Risk (VaR) over the out-of-sample period. Specifically, the returns from September 2024 to December 2024 (122 days) are used for this out-of-sample analysis.

For each time point $t$, the one-step-ahead conditional variance at time $t+1$ is given by
\begin{align*}
    \hat{\sigma}_{t+1|t}^2 = \hat{\W}_{t} + \hat{\A}_{t}r_{t}^2 + \hat{\B}_t\tilde{\sigma}_t^2,
\end{align*}
where $\hat{\W}_t$, $\hat{\A}_t$, and $\hat{\B}_t$ are the parameter estimates obtained  by either the EOPE or the EDPE using the return series up to time $t$.  The term $\tilde \sigma_t^2$ is computed recursively according to \eqref{tsig} with the corresponding plug-in estimates, using the initial values $r_0^2=\tilde\sigma_0^2=r_1^2$. The forecasting performance is evaluated using the root mean squared error (RMSE) and the mean absolute error (MAE). Since the true conditional variance $\sigma_{t+1}^2$ is unobservable, we use $r_{t+1}^2$ as its proxy, noting that $\E(r_{t+1}^2|\mF_t)=\sigma_{t+1}^2$ under the GARCH model. Accordingly, over the 122 out-of-sample days, the RMSE and MAE are calculated as
$$\text{RMSE}=\sqrt{\frac{1}{122}\sum_{t=1096}^{1217}(\hat{\sigma}_{t+1|t}^2-r_{t+1}^2)^2} \quad\text{and}\quad\text{MAE}= \frac{1}{122}\sum_{t=1096}^{1217}\big|\hat{\sigma}_{t+1|t}^2-r_{t+1}^2\big|,$$
where the forecasting period spans from September 1, 2024 to December 31, 2024.

\begin{table}[!t]
\tabcolsep=5pt
\renewcommand{\arraystretch}{1.6}
\centering
{\small
\caption{GARCH parameter estimates and one-step-ahead forecasting performance for BTC-USD data}
\label{table:result_BTC-USD}
\begin{tabular}{ccccccccc}
\hline
         &  & \multicolumn{7}{c}{EDPE ($\G$)} \\[-0.8ex] \cmidrule(ll){3-9}
         & EOPE       & 0.05   & 0.10   & 0.20   & 0.30   & 0.50   & 0.75   & 1.00  \\[-0ex]
          \hline
$\hat\W$ & 1.859  & 1.403  & 1.078  & 0.726  & 0.738  & 0.990  & 1.194  & 1.306  \\
$\hat\A$ & 0.167  & 0.134  & 0.113  & 0.092  & 0.094  & 0.125  & 0.179  & 0.251  \\
$\hat\B$ & 0.637  & 0.694  & 0.736  & 0.776  & 0.746  & 0.617  & 0.484  & 0.396  \\ 
\hline
RMSE     & 3.553 & 3.531 & 3.519 & \tcr{3.512} & 3.517 & 3.534 & 3.553 & 3.577 \\
MAE      & 7.908  & 7.563  & 7.277  & 6.874  & 6.666  & \tcr{6.515}  & 6.593  & 6.832  \\
\hline
VaR violation rate & 0.016  & 0.016  & 0.016  & \tcr{0.049}  & \tcr{0.049}  & 0.057  & 0.066  & 0.074  \\ \hline
\end{tabular}}
\end{table}

The one-step-ahead 95\% VaR forecast at time $t+1$ is defined as 
\begin{eqnarray*}
    VaR_{t+1|t} = F_\ep^{-1}(0.05) \, \hat{\sigma}_{t+1|t}
\end{eqnarray*}
where $F_\ep$ is the cumulative distribution function (CDF) of the innovation $\epsilon_t$ in the GARCH model. Since we assume that $\ep_t$ follows a standard normal distribution in this analysis, we backtest the VaR forecasts by computing the empirical violation rate
\begin{eqnarray*}
    \frac{1}{122} \sum_{t=1096}^{1217} I\left(r_{t+1} < \Phi^{-1}(0.05) \, \hat{\sigma}_{t+1|t} \right),
\end{eqnarray*}
where $I(\cdot)$ denotes the indicator function. Under a correctly specified model, the probability that $r_{t+1}$ falls below $VaR_{t+1|t}$ equals $0.05$,
so empirical violation rates close to $0.05$ indicate that the fitted model is well calibrated and reasonably captures the tail behavior of the return distribution.

The first three rows of Table~\ref{table:result_BTC-USD} report the posterior mean estimates of the GARCH$(1,1)$ parameters obtained using the EOPE and the EDPE for different values of the tuning parameter $\gamma$. One can see noticeable differences between the EOPE and EDPE estimates, particularly for the parameter $\W$. Specifically, the EOPE yields substantially larger values than the EDPE across the entire range of $\gamma$.
Recall from the simulation study that, in the absence of outliers, the EOPE and the EDPE produce similar estimates for small $\gamma$. In contrast, in the presence of outliers, the outlier-sensitive EOPE tends to overestimate $\W$, leading to estimates that differ substantially from those of the EDPE, even when $\gamma$ is small. In light of these findings, the inflated estimate of $\W$ produced by the EOPE strongly suggests the  presence of observations in the return series that behave like outliers and exert a non-negligible influence on the EOPE.


\begin{figure}[!t]
    \centering
    \includegraphics[width=0.8\linewidth]{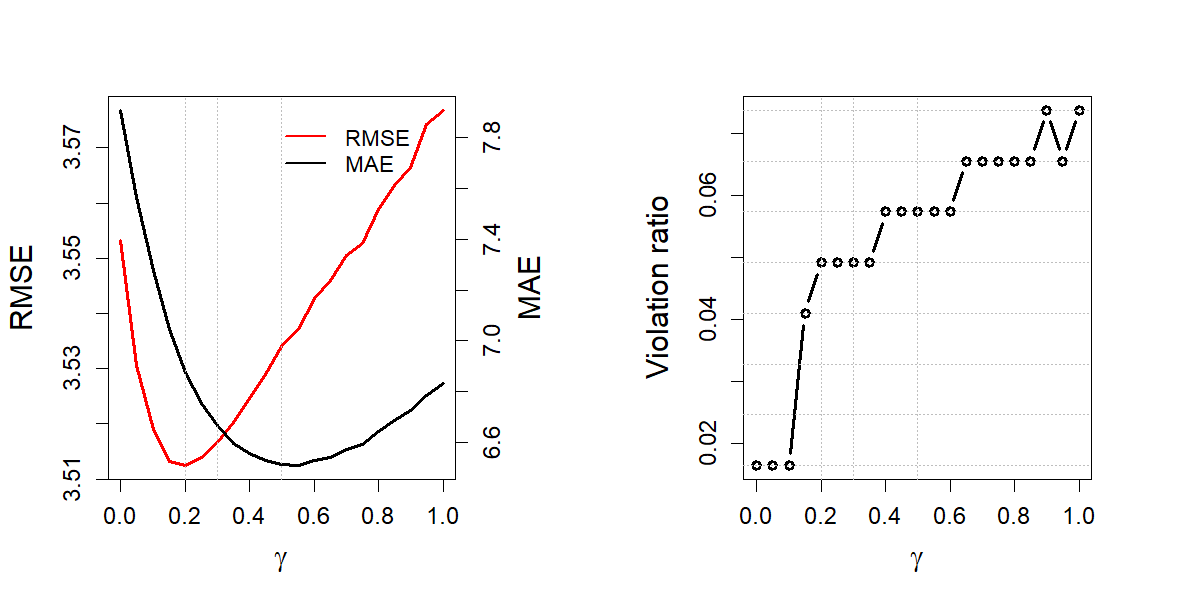}
\caption{RMSE and MAE for conditional variance forecasts (L) and the empirical 95\% VaR violation ratio (R)}
    \label{fig:mse_mae_p}
\end{figure}

The bottom rows of Table \ref{table:result_BTC-USD} report the one-step-ahead forecasting performance in terms of RMSE, MAE, and the empirical violation rate of the 95\% VaR, while Figure \ref{fig:mse_mae_p} illustrates how these measures vary with the tuning parameter $\gamma$ over the range $[0,1]$. As anticipated, the EDPE outperforms the EOPE for most values of $\gamma$ in terms of both RMSE and MAE, indicating that the proposed Bayesian estimator  can serve as a useful alternative when the data may contain observations with outlier-like influence.

We also observe that the RMSE and the MAE attain their minima at $\G=0.2$ and $\G=0.5$, respectively. Although both measures are reported to assess forecasting accuracy, we place greater emphasis on RMSE as it penalizes large forecast errors more heavily than MAE and is therefore more informative for volatility modeling and risk management, where capturing extreme market movements is crucial. This consideration also motivates the data-driven selection rule for $\gamma$ discussed in Remark \ref{rmk:select_gamma}, which is based on minimizing the out-of-sample RMSE. According to this criterion, we select $\gamma=0.2$ as the optimal tuning value for the given dataset. 

Notably, this choice of $\gamma$ is also associated with near-optimal VaR calibration. As seen in the last row of the table, the EOPE yields a violation rate of $0.016$ (2 violations out of 122), which is substantially below the nominal 0.05 level, indicating overly conservative VaR estimates. In contrast, the EDPE with $\G$ between $0.20$ and $0.35$ attains violation rates of $0.049$ (6 violations out of 122), very close to the target level of $0.05$. For larger values of $\G$, the violation rate exceeds $0.05$, implying that the VaR forecasts become too aggressive  and thus underestimate downside risk. Overall, a moderate value of $\G$ (roughly between $0.2$ and $0.5$) appears to provide a favorable trade-off between forecasting accuracy and VaR calibration.

\begin{figure}[!t]
    \centering
     \includegraphics[height=0.45 \textwidth,width=1\textwidth]{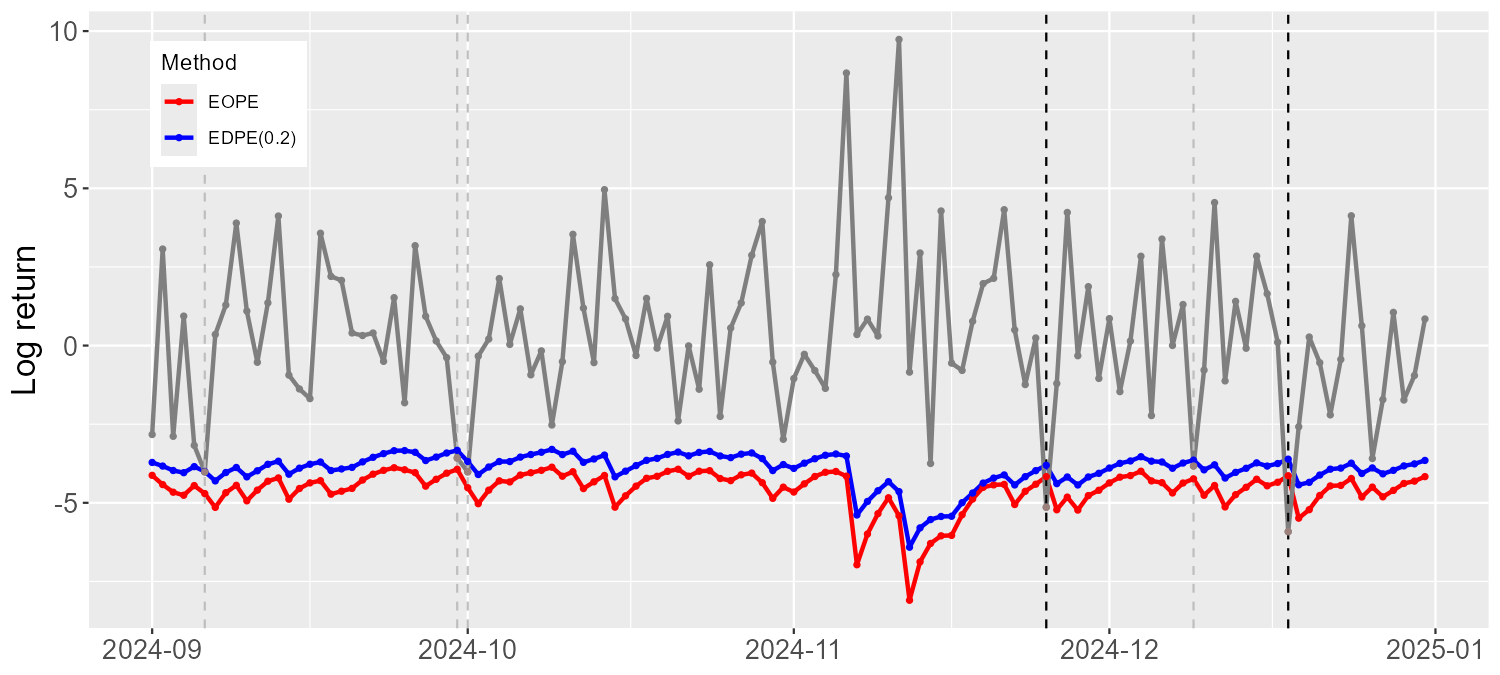}
\caption{Daily log returns and one-step-ahead 95\% VaR forecasts under the EOPE and EDPE for September--December 2024}
    \label{fig:result}
\end{figure}

Figure \ref{fig:result} displays the log returns together with the one-step-ahead 95\% VaR forecasts obtained using the EOPE and the EDPE with $\G=0.2$. The gray line represents the realized return series, while the red and blue lines depict the corresponding VaR forecasts computed by the EOPE and the EDPE, respectively. The black vertical dashed lines indicate dates on which VaR violations occur for both estimators, whereas the gray vertical dashed lines correspond to violations that occur only for the EDPE-based forecasts. We can clearly see that the VaR forecasts produced by the EOPE are relatively wider, which appears to be associated with the relatively large estimate of $\W$, likely influenced by the presence of outlier-like observations as discussed earlier.  In contrast, the EDPE yields VaR forecasts that are neither overly conservative nor overly aggressive, but instead well aligned with the nominal coverage level.

Overall, the real-data analysis of BTC-USD confirms the practical usefulness of the proposed robust Bayesian estimator. Compared with the conventional EOPE, the EDPE yields improved out-of-sample forecasts of the conditional variance over the evaluation period. Moreover, the VaR backtesting results indicate that the EOPE produces overly conservative risk forecasts, whereas the EDPE with a moderate tuning parameter achieves empirical violation rates close to the nominal level, providing better calibration of tail risk. Taken together, these findings suggest that, for financial return series that exhibit occasional extreme shocks, the proposed EDPE offers a favorable trade-off between predictive accuracy and risk calibration, and thus serves as a robust alternative to standard Bayesian inference for GARCH-type models.

\section{Conclusion}\label{Conclusion}
Outliers and abrupt shocks can severely compromise Bayesian inference for conditionally heteroscedastic time series by dominating the likelihood and, in turn, the posterior distribution. To mitigate this sensitivity, we propose a robust Bayesian framework based on a DPD-driven pseudo-posterior indexed by a tuning parameter $\gamma$. The resulting estimator, defined as the posterior mean under the DPD-based posterior, connects smoothly to the ordinary Bayes estimator  as $\gamma\downarrow 0$ and becomes robust for $\gamma>0$ by downweighting extreme observations.

We establish asymptotic theory under stationarity and ergodicity, including a Bernstein--von Mises type approximation for the DPD-based posterior and asymptotic equivalence between the EDPE and the MDPDE. We also show that the assumptions required for these asymptotic results hold for GARCH$(p,q)$ models under the standard regularity conditions commonly imposed in the GARCH literature.  Numerical studies for GARCH$(1,1)$ demonstrate that moderate values of $\gamma$ substantially improve accuracy under contamination and can outperform heavy-tailed likelihood-based alternatives, while remaining competitive in uncontaminated samples. In an application to BTC-USD returns, the EDPE improves one-step-ahead volatility forecasts and yields better-calibrated 95\% VaR, producing empirical violation rates close to the nominal level.

In sum, the proposed approach provides a practical and theoretically grounded robust Bayesian tool for volatility modeling in the presence of occasional extreme shocks. Important extensions include adapting the methodology to integer-valued time series models, where extreme counts and outliers are often consequential, as well as to multivariate Bayesian settings to further broaden its applicability.

\section{Appendix}
In this appendix, we present proofs of the main theorems and related lemmas.\\

\noindent{\bf Proof of Theorem \ref{thm:gamma0}}\\
Let $\Delta_\gamma(\theta):=\widetilde Q_n^{(\gamma)}(\theta)-\sum_{t=1}^n\log \tilde f_\theta(X_t|\mathcal{F}_{t-1}) $.
Then, it follows from condition {\bf C3} that 
\[
e^{-V_n\gamma}\le e^{\Delta_\gamma(\theta)}\le e^{V_n\gamma}
\qquad\text{for all }\theta\in\Theta
\]
and thus, we have
\[
\sup_{\theta\in\Theta}\big|e^{\Delta_\gamma(\theta)}-1\big|
\le \max\{e^{V_n\gamma}-1,\,1-e^{-V_n\gamma}\}
\longrightarrow 0
\quad as\ \gamma\downarrow 0,
\]
which implies
\[
\sup_{\theta\in\Theta}
\left|
\frac{\exp\!\big(\widetilde Q_n^{(\gamma)}(\theta)\big)}{\tilde L(\theta \mid \bX)}-1
\right|
=
\sup_{\theta\in\Theta}\big|e^{\Delta_\gamma(\theta)}-1\big|
\longrightarrow 0\quad as\ \gamma\downarrow 0.
\]
Since $\Theta$ is compact and $\theta\mapsto \tilde L(\theta\mid\bX)$ is continuous on $\Theta$ by condition {\bf C2}, the function $\tilde L(\theta\mid\bX)$ is bounded on $\Theta$. Moreover, since $\pi(\theta)$ is continuous on $\Theta$ by condition {\bf C4}, it is also bounded.
Therefore, we obtain
\begin{equation}\label{unif_conv}
\begin{split}
\sup_{\theta\in\Theta}\left|\exp(\widetilde Q_n^{(\gamma)}(\theta))\pi(\theta)-\tilde L(\theta\mid \bX) \pi(\theta)\right|
&\ \le\ \sup_{\theta\in\Theta}\left|\tilde L(\theta\mid \bX)\pi(\theta)\right|
\sup_{\theta\in\Theta}\left|\frac{\exp(\widetilde Q_n^{(\gamma)}(\theta))}{\tilde L(\theta\mid \bX)}-1\right|\\
&\ \longrightarrow\ 0 \quad \text{as } \gamma\downarrow 0.
\end{split}
\end{equation}


Next, let $u_\gamma(\theta):=\exp(\widetilde Q_n^{(\gamma)}(\theta))\pi(\theta)$ and $u_0(\theta):=\tilde L(\theta\mid\bX)\pi(\theta)$, with normalizing constants
$Z_\gamma:=\int_\Theta u_\gamma(\theta)\,d\theta$ and $Z_0:=\int_\Theta u_0(\theta)\,d\theta$. By \eqref{unif_conv}, we have $\sup_{\theta\in\Theta}|u_\gamma(\theta)-u_0(\theta)|\to0$. Thus, it follows from compactness of $\Theta$ that
\begin{equation}\label{convs}
   |Z_\gamma-Z_0|
\le \int_\Theta |u_\gamma(\theta)-u_0(\theta)|\,d\theta
\le \sup_{\theta\in\Theta}|u_\gamma(\theta)-u_0(\theta)|\int_\Theta 1\,d\theta
\longrightarrow 0 \quad \text{as } \gamma\downarrow 0.
\end{equation}
Moreover, using conditions {\bf C4} and {\bf C5} together with the positivity of $\tilde L(\theta\mid\bX)$, we can show that $Z_0>0$, yielding $Z_\gamma\ge Z_0/2$ for all sufficiently small $\gamma$. Writing the normalized densities as
$p_\gamma(\theta):=u_\gamma(\theta)/Z_\gamma$ and $p_0(\theta):=u_0(\theta)/Z_0$, we obtain
\[
\int_\Theta |p_\gamma(\theta)-p_0(\theta)|\,d\theta
\le \frac{1}{Z_\gamma}\int_\Theta |u_\gamma(\theta)-u_0(\theta)|\,d\theta
+ \left|\frac1{Z_\gamma}-\frac1{Z_0}\right|\int_\Theta u_0(\theta)\,d\theta \longrightarrow 0 \quad \text{as } \gamma\downarrow 0,
\]
since $Z_\gamma\to Z_0$ and $\int_\Theta |u_\gamma-u_0|\to0$ by \eqref{convs}. This implies convergence in total variation and completes the proof.
\hfill{$\Box$}\vspace{0.2cm}\\

\begin{lemma}\label{ULLN} Suppose that assumptions {\bf A1} and {\bf A2} hold. Then, for each $\G>0$, we have that 
\[ \sup_{\T\in\Theta} \Big| \frac{1}{n}\widetilde Q_n^{(\G)}(\T) - Q^{(\G)}(\T)\Big| = o(1)\quad a.s.,\]
where $Q^{(\G)}(\T)=\E q^{(\G)}_t(\T)$.
\end{lemma}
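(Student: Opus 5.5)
The plan is to split the discrepancy into an infeasible part and an approximation part, using the triangle inequality:
\[
\sup_{\T\in\Theta}\Big|\frac1n\widetilde Q_n^{(\G)}(\T)-Q^{(\G)}(\T)\Big|
\le \frac1n\sum_{t=1}^n\sup_{\T\in\Theta}\big|\tilde q_t^{(\G)}(\T)-q_t^{(\G)}(\T)\big|
+\sup_{\T\in\Theta}\Big|\frac1n Q_n^{(\G)}(\T)-Q^{(\G)}(\T)\Big|.
\]
The first term is $o(1)$ a.s. directly by the second part of assumption {\bf A2}. The remaining work is therefore a uniform strong law of large numbers for the stationary ergodic sequence $\{q_t^{(\G)}(\T)\}$.

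For the uniform SLLN I will use the standard compactness argument, as in Straumann--Mikosch or Lemma~2.2 of \cite{ling2010general}. The dominating envelope $\E\sup_{\T\in\Theta}|q_t^{(\G)}(\T)|<\infty$ from {\bf A2} first guarantees that $Q^{(\G)}(\T)$ is finite. Combined with continuity from {\bf A1} and dominated convergence, it also makes $Q^{(\G)}$ continuous on $\Theta$.

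Next fix $\T^*\in\Theta$ and a ball $B(\T^*,\eta)$. The functions $\sup_{\T\in B(\T^*,\eta)\cap\Theta}q_t^{(\G)}(\T)$ and $\inf_{\T\in B(\T^*,\eta)\cap\Theta}q_t^{(\G)}(\T)$ are measurable, since continuity allows the sup to be taken over a countable dense set. They are stationary and ergodic as measurable functions of the underlying ergodic process. They are integrable by the envelope. Ergodic's theorem then gives, almost surely,
\[
\limsup_n \sup_{\T\in B}\frac1n Q_n^{(\G)}(\T)\le \E\sup_{\T\in B}q_t^{(\G)}(\T),
\]
together with the analogous bound for the inf.

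Continuity of $q_t^{(\G)}$ and dominated convergence show that $\E\sup_{B(\T^*,\eta)}q_t^{(\G)}$ and $\E\inf_{B(\T^*,\eta)}q_t^{(\G)}$ tend to $Q^{(\G)}(\T^*)$ as $\eta\downarrow0$. Hence for any $\epsilon>0$ each $\T^*$ has a neighborhood on which both quantities are within $\epsilon$ of $Q^{(\G)}(\T^*)$. By continuity of $Q^{(\G)}$, the neighborhood can be shrunk so that they are also within $2\epsilon$ of $Q^{(\G)}(\T)$ for every $\T$ in it.

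Compactness of $\Theta$ then yields a finite subcover. Taking the union of finitely many null sets gives
\[
\limsup_n\sup_{\T\in\Theta}\Big|\frac1nQ_n^{(\G)}(\T)-Q^{(\G)}(\T)\Big|\le 3\epsilon \quad\text{a.s.},
\]
and letting $\epsilon\downarrow0$ along a countable sequence concludes.

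The main subtle point is the ergodicity of the local sup/inf processes. Assumption {\bf A1} only states ergodicity of $\{q_t^{(\G)}(\T)\}$ for each fixed $\T$. I will resolve this by noting that in model \eqref{gts} $q_t^{(\G)}(\T)$ is a fixed measurable function of $(X_t,X_{t-1},\dots)$ for all $\T$ simultaneously. Local sups over a countable dense set are therefore measurable functions of the same stationary ergodic sequence $\{X_t\}$, so they inherit stationarity and ergodicity. Measurability and compactness of $\Theta$ are assumed throughout.
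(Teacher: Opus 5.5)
Your proposal is correct and follows the same route as the paper: the gap between the feasible and infeasible objectives is removed by the second part of \textbf{A2}, and the infeasible average is handled by a uniform ergodic theorem. The paper obtains that theorem by citing Theorem 2.7 of \cite{straumann2006quasi}, whereas you prove it directly by the standard local sup/inf and finite-subcover argument, and your remark that this needs ergodicity of the $C(\Theta)$-valued sequence $\{q_t^{(\G)}(\cdot)\}$ (guaranteed because $q_t^{(\G)}(\T)$ is a time-invariant function of $(X_t,X_{t-1},\dots)$), not just the pointwise ergodicity stated in \textbf{A1}, makes explicit a hypothesis the paper's citation uses implicitly.
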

\begin{proof}
By the ergodicity of $\{q_t^{(\G)}(\T)\}$, we have that $\frac{1}{n}Q_n^{(\G)}(\T)$ converges almost surely to $Q^{(\G)}(\T)$ for each $\T \in \Theta$. Hence, applying Theorem 2.7 of \cite{straumann2006quasi} along with the condition $\E \sup_{\T\in\Theta} \big|q_t^{(\G)}(\T)\big|<\infty$, we obtain
\[ \sup_{\T\in\Theta} \Big| \frac{1}{n} Q_n^{(\G)}(\T) - Q^{(\G)}(\T)\Big| = o(1)\quad a.s.\]
Thus, the lemma follows from the second part of assumption {\bf A2}.
\end{proof}

\noindent{\bf Proof of Theorem \ref{thm1}}\\
Note that $d_\gamma\left(f_{\theta_0}(\cdot\mid\mathcal{F}_{t-1}),f_\theta(\cdot \mid\mathcal{F}_{t-1})\right)$ is almost surely nonnegative and  is uniquely minimized at $\T=\T_0$. Hence, its expectation, $\E\,d_\alpha\left(f_{\theta_0}(\cdot\mid\mathcal{F}_{t-1}),f_\theta(\cdot \mid\mathcal{F}_{t-1})\right)$, is also minimized at $\T_0$.  Since
\[ Q^{(\G)}(\T)=-\frac{1}{1+\G}\E\,d_\gamma\left(f_{\theta_0}(\cdot\mid\mathcal{F}_{t-1}),f_\theta(\cdot\mid\mathcal{F}_{t-1})\right)+\frac{1}{\G(1+\G)}\int f_{\T_0}^{1+\G}(x\mid\mathcal{F}_{t-1})dx,\]
it follows that  $Q^{(\G)}(\T)$ attains its unique maximum  at $\T_0$
 Therefore, by Lemma \ref{ULLN}, the strong consistency of the estimator $\hat \T_{\G,n}$ can be established by the standard arguments.

 Next, we prove the asymptotic normality. By assumption {\bf A3} and the central limit theorem for martingale differences, we have 
\begin{eqnarray}\label{CLT}
    \frac{1}{\sqrt{n}} \pa_\T Q_n^{(\G)}(\T_0)\ \stackrel{d}{\longrightarrow}\ N_d(0,\mathcal{I}_\G).
\end{eqnarray}
 Hence, it follows from assumption {\bf A4} that $\frac{1}{\sqrt{n}}\pa_\T\widetilde Q_n^{(\G)}(\T_0)=O_P(1)$.
Furthermore, under assumptions {\bf A5} and {\bf A6}, it can be shown that for any sequence of random vectors  $\{\T_n^*\}$ converging almost surely to $\T_0$, 
\begin{eqnarray}\label{conv.Q2}
   \frac{1}{n}\paa \widetilde Q_n^{(\G)}(\T^*_n)\ \longrightarrow \ -\mathcal{J}_\G\quad a.s.
\end{eqnarray}
 Since $\pa_\T \widetilde Q_n^{(\G)}(\hat\T_{\G,n})=0$, the mean value theorem yields $\pa_\T \widetilde Q_n^{(\G)}(\T_0)=-\paa \widetilde Q_n^{(\G)}(\bar\T_n)(\hat\T_{\G,n}-\T_0)$, where $\bar\T_n$ is a point on the line segment between $\hat\T_{\G,n}$ and $\T_0$. Thus, we can see that 
\begin{eqnarray*}
\sqrt{n} (\hat{\theta}_{\G,n}-\theta_0)
&=& \mathcal{J}_\G^{-1}\frac{1}{\sqrt{n}}\pa_\T \widetilde Q_n^{(\G)}(\T_0)+
\mathcal{J}_\G^{-1}\Big(\frac{1}{n}\paa \widetilde Q_n^{(\G)}(\bar\T_n) +\mathcal{J}_\G\Big)\sqrt{n}(\hat{\T}_{\G,n}-\theta_0)\\
&=&O_P(1)+o_P(1)\sqrt{n} (\hat{\T}_{\G,n}-\T_0),
\end{eqnarray*}
which implies $\sqrt{n} (\hat{\theta}_{\G,n}-\theta_0)=O_P(1)$. Therefore, it follows from assumption {\bf A4} that
\begin{eqnarray*}
\sqrt{n} (\hat\T_{\G,n}-\theta_0)
= \mathcal{J}_\G^{-1}\frac{1}{\sqrt{n}}\pa_\T Q^{(\G)}_n(\theta_0)+o_P(1),
\end{eqnarray*}
which together with (\ref{CLT}) establishes the asymptotic normality of $\sqrt{n} (\hat\T_{\G,n}-\theta_0)$. This completes the proof.
\hfill{$\Box$}\vspace{0.2cm}\\

\begin{lemma}\label{Q.R2} Suppose that assumptions {\bf A1} and {\bf A2} hold.  For any $\delta>0$, there exists a $\ep>0$ such that, for all sufficiently large $n$,  
\[ \sup_{\|\T-\T_0\| >\delta}  \frac{1}{n}\big(\widetilde Q_n^{(\G)}(\T) - \tilde Q_n^{(\G)}(\T_0)\big) < -\ep \quad a.s.\]
\end{lemma}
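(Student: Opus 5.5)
The argument combines the uniform law of large numbers in Lemma~\ref{ULLN} with the identifiability property from the proof of Theorem~\ref{thm1}: $Q^{(\G)}(\T)$ attains its unique maximum at $\T_0$. The steps are as follows.

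\textbf{Step 1: continuity of the limit.} Since $q_t^{(\G)}(\T)$ is continuous in $\T$ (assumption {\bf A1}) and $\E\sup_{\T\in\Theta}|q_t^{(\G)}(\T)|<\infty$ (assumption {\bf A2}), dominated convergence shows that $Q^{(\G)}(\T)=\E q_t^{(\G)}(\T)$ is continuous on $\Theta$.

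\textbf{Step 2: a uniform gap away from $\T_0$.} Fix $\delta>0$ and set $K_\delta=\{\T\in\Theta:\|\T-\T_0\|\ge\delta\}$. This set is closed in the compact set $\Theta$, hence compact. If $K_\delta$ is empty there is nothing to prove. Otherwise the continuous function $Q^{(\G)}(\T)-Q^{(\G)}(\T_0)$ attains its maximum on $K_\delta$ at some $\T^\ast\neq\T_0$. Uniqueness of the maximizer then gives
\[
\sup_{\T\in K_\delta}\big(Q^{(\G)}(\T)-Q^{(\G)}(\T_0)\big)=-2\ep<0
\]
for some $\ep>0$. This is where I expect the only real subtlety. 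The conclusion needs compactness of $\Theta$, which is assumed throughout, together with the \emph{strict} uniqueness of the maximizer. Uniqueness rests on the identifiability argument in the proof of Theorem~\ref{thm1}: $d_\G\ge0$, with equality only when $f_\T(\cdot\mid\mathcal F_{t-1})=f_{\T_0}(\cdot\mid\mathcal F_{t-1})$ a.s., and the latter forces $\T=\T_0$. I would invoke that argument directly rather than reprove it.

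\textbf{Step 3: transfer to the feasible criterion.} Let $\eta_n=\sup_{\T\in\Theta}\big|\frac1n\widetilde Q_n^{(\G)}(\T)-Q^{(\G)}(\T)\big|$; by Lemma~\ref{ULLN}, $\eta_n\to0$ a.s. For every $\T\in K_\delta$,
\[
\frac1n\big(\widetilde Q_n^{(\G)}(\T)-\widetilde Q_n^{(\G)}(\T_0)\big)\le Q^{(\G)}(\T)-Q^{(\G)}(\T_0)+2\eta_n\le -2\ep+2\eta_n .
\]
On the probability-one event where $\eta_n\to0$, we have $\eta_n<\ep/2$ for all sufficiently large $n$. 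The supremum over $\|\T-\T_0\|>\delta$, a subset of $K_\delta$, is therefore at most $-\ep<0$ for all large $n$. Since $\ep$ depends only on $\delta$, this is the claim; the threshold for $n$ may depend on the sample path, as the statement allows.
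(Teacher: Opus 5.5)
Your proof is correct and follows essentially the same route as the paper: a uniform gap $\sup_{\|\T-\T_0\|>\delta}Q^{(\G)}(\T)<Q^{(\G)}(\T_0)$ from uniqueness of the maximizer, transferred to $\widetilde Q_n^{(\G)}$ via the uniform convergence in Lemma~\ref{ULLN}. The paper's $3\ep$ gap with separate $\ep$-approximations at $\T$ and at $\T_0$ plays the role of your $2\ep$ gap and $2\eta_n$ error. Your version is a bit more careful, since it spells out the compactness of $\{\T\in\Theta:\|\T-\T_0\|\ge\delta\}$ and the continuity of $Q^{(\G)}$, which the paper's phrase ``by the continuity of $Q^{(\G)}$'' uses only implicitly.
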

\begin{proof}
Recall that $Q^{(\G)}(\T)$ has a unique maximum at $\T_0$. Thus, by the continuity of $Q^{(\G)}(\T)$, for any $\delta>0$, one can choose  $\ep>0$ such that 
\begin{eqnarray*}
    \sup_{\|\T-\T_0\|>\delta} Q^{(\G)}(\T) \leq Q^{(\G)}(\T_0) -3\ep.
\end{eqnarray*}
Using  Lemma \ref{ULLN} and the above result, we obtain that, for any $\T$ satisfying $\|\T-\T_0\|>\delta$ and for all sufficiently large $n$, 
\begin{eqnarray}\label{eq.Q}
    \frac{1}{n}\widetilde Q_n^{(\G)}(\T) \leq Q^{(\G)}(\T) +\ep \leq   Q^{(\G)}(\T_0) -2\ep\quad a.s.
\end{eqnarray}
Furthermore, for all sufficiently large $n$, since $\frac{1}{n}\widetilde Q_n^{(\G)}(\T_0) \geq  Q^{(\G)}(\T_0) -\ep\ a.s.$ by Lemma \ref{ULLN}, it follows from (\ref{eq.Q}) that 
\[\frac{1}{n}\widetilde Q_n^{(\G)}(\T)-\frac{1}{n}\widetilde Q_n^{(\G)}(\T_0) \leq -\ep\quad a.s.\]
for all $\T$ with $\|\T-\T_0\|>\delta$ and for all sufficiently large $n$. This completes the proof.
\end{proof}

\noindent{\bf Proof of Theorem \ref{thm2}}\\
Letting 
\[C_n:=\int_{\mathbb{R}^d} \exp\Big( \widetilde Q_n^{(\G)}\Big( \hat\T_{\G,n} + \frac{t}{\sqrt{n}}\Big)- \widetilde Q_n^{(\G)}\big( \hat\T_{\G,n}\big)\Big)\,\pi\Big( \hat\T_{\G,n} + \frac{t}{\sqrt{n}}\Big)dt,\]
the $\G$-posterior density of $t=\sqrt{n}(\T-\hat\T_{\G,n})$ can be expressed as
\[ \pi_\G(t|\bX)=\frac{1}{C_n} \exp\Big( \widetilde Q_n^{(\G)}\Big( \hat\T_{\G,n} + \frac{t}{\sqrt{n}}\Big)- \widetilde Q_n^{(\G)}\big( \hat\T_{\G,n}\big)\Big)\,\pi\Big( \hat\T_{\G,n} + \frac{t}{\sqrt{n}}\Big).\]
 Define 
 \[g_n(t):=\pi\Big( \hat\T_{\G,n} + \frac{t}{\sqrt{n}}\Big)\exp\Big( \widetilde Q_n^{(\G)}\Big( \hat\T_{\G,n} + \frac{t}{\sqrt{n}}\Big)- \widetilde Q_n^{(\G)}\big( \hat\T_{\G,n}\big)\Big) - \pi(\T_0)e^{-\frac{1}{2}t'\mathcal{J}_\G t}.\]
 Then, the theorem follows if we show that 
 \begin{eqnarray}\label{main}
     \int_{\mathbb{R}^d} \big|g_n(t)\big|dt =o_P(1).
 \end{eqnarray}
To show the above, we split $\mathbb{R}^d$ into two regions $\mathcal{R}_1=\{t \in\mathbb{R}^d \mid \|t\| \leq \delta_0 \sqrt{n} \}$ and $\mathcal{R}_2=\{t\in\mathbb{R}^d \mid \|t\| > \delta_0 \sqrt{n} \}$ for some $\delta_0 >0$. 

First, we deal with the integral on $\mathcal{R}_1$. By the Taylor theorem together with $\pa_\T \widetilde Q_n^{(\G)}(\hat\T_{\G,n})=0$, we have 
\begin{eqnarray}\label{Q2}
    \widetilde Q_n^{(\G)}\Big( \hat\T_{\G,n} + \frac{t}{\sqrt{n}}\Big)- \widetilde Q_n^{(\G)}\big( \hat\T_{\G,n}\big)
= \frac{1}{2n}t'\paa\widetilde Q_n^{(\G)} (\bar\T_{\G,n})t,
\end{eqnarray} 
where $\bar\T_{\G,n}$ lies between $\hat\T_{\G,n} + \frac{t}{\sqrt{n}}$ and $\hat\T_{\G,n}$. For any fixed $t$, since $\bar\T_{\G,n}$ converges almost surely to $\T_0$, it follows from (\ref{conv.Q2}) that  $\frac{1}{n} \paa \widetilde Q_n^{(\G)} (\bar\T_{\G,n})$ converges almost surely to $-\mathcal{J}_\G$, and consequently $g_n(t)$ converges almost surely to 0. 

Note that $\mathcal{J}_\G$ is positive definite by assumption {\bf A7}. Then, we can see that for all sufficiently large $n$ and any fixed $t$,
\begin{eqnarray}\label{ineq.Q2}
   \frac{1}{n}t'\paa\widetilde Q_n^{(\G)} (\bar\T_{\G,n})t \leq - \frac{1}{2}t'\mathcal{J}_\G t\quad a.s.
\end{eqnarray}
Moreover, for any $t\in \mathcal{R}_1$, we have $\|\T-\T_0\| \leq \|\hat\T_{\G,n}-\T_0\|+\|t\|/\sqrt{n} \leq \|\hat\T_{\G,n}-\T_0\|+ \delta_0$. 
Since $\pi(\T)$ is continuous at $\T_0$ and $\|\hat\T_{\G,n}-\T_0\|$ converges almost surely to zero by Theorem \ref{thm1}, for any $c>0$, we can choose $\delta_0>0$ small enough such that, for all sufficiently large $n$,
\begin{eqnarray}\label{ineq.pi}
\sup_{t\in\mathcal{R}_1} \Big| \pi\Big(\hat\T_{\G,n}+\frac{t}{\sqrt{n}}\Big) - \pi(\T_0)\Big|
= \sup_{\| \T-\hat\T_{\G,n}\| \le \delta_0} \Big| \pi(\T) - \pi(\T_0)\Big| \leq c.   
\end{eqnarray}
Thus, we have  from (\ref{Q2}), (\ref{ineq.Q2}), and  (\ref{ineq.pi}) that for all sufficiently large $n$ and any fixed $t\in\mathcal{R}_1$,
\begin{eqnarray*}
      \big|g_n(t)\big|
    &\leq&  \pi\Big( \hat\T_{\G,n} + \frac{t}{\sqrt{n}}\Big)\exp\Big( \frac{1}{2n} t'\paa \widetilde Q_n^{(\G)} (\bar\T_{\G,n})t \Big)
    +  \pi(\T_0)e^{-\frac{1}{2}t'\mathcal{J}_\G t} \\
    &\leq&
    \big(\pi(\T_0)+c \big)e^{-\frac{1}{4}t'\mathcal{J}_\G t} + \pi(\T_0)e^{-\frac{1}{2}t'\mathcal{J}_\G t} \quad a.s.,
\end{eqnarray*}
which is integrable on $\mathcal{R}_1$. Hence, by the dominate convergence theorem, it follows that
\begin{eqnarray}\label{conv.R1}
    \int_{\mathcal{R}_1} \big|g_n(t)\big|dt = o(1)\quad a.s.
\end{eqnarray}

Next, using Lemma \ref{Q.R2}, we can take a constant $\ep>0$ such that 
\[ \sup_{\|\T-\T_0\| >\delta_0} \frac{1}{n} \big(\widetilde Q_n^{(\G)}(\T) - \tilde Q_n^{(\G)}(\T_0)\big) < -\ep \quad a.s.\]
Then, for each $t\in\mathcal{R}_2$, we have by (\ref{conv.Q2}) and the consistency of $\hat\T_{\G,n}$ that for all sufficiently large $n$,
\begin{eqnarray*}
    \frac{1}{n} \Big(\widetilde Q_n^{(\G)}\Big( \hat\T_{\G,n} + \frac{t}{\sqrt{n}}\Big)- \widetilde Q_n^{(\G)}\big( \hat\T_{\G,n}\big)\Big)
&=& \frac{1}{n}\Big( \widetilde Q_n^{(\G)}(\T) - \widetilde Q_n^{(\G)}(\T_0)\Big)+ \frac{1}{n}\Big(\widetilde Q_n^{(\G)}(\T_0) - \widetilde Q_n^{(\G)}\big( \hat\T_{\G,n}\big)\Big)\\
&\leq& -\ep + \frac{1}{2n}(\T_0-\hat\T_{\G,n})'\paa \widetilde Q_n^{(\G)}\big(\bar\T_{\G,n} \big) (\T_0-\hat\T_{\G,n})\quad a.s.\\
&\leq& -\frac{\ep}{2}\quad a.s.,
\end{eqnarray*}
where $\bar\T_{\G,n}$ is between $\T_0$ and $\hat\T_{\G,n}$. Thus, we have that for all sufficiently large $n$,
\begin{eqnarray*}
    \int_{\mathcal{R}_2} \big|g_n(t)\big|dt
    &\leq& 
    e^{-\frac{n\ep}{2}}\int_{\mathcal{R}_2} \pi\Big( \hat\T_{\G,n} + \frac{t}{\sqrt{n}}\Big)dt + \int_{\mathcal{R}_2} \pi(\T_0)e^{-\frac{1}{2}t'\mathcal{J}_\G t}dt\quad a.s.\\
    &\leq&
    n^{d/2} e^{-\frac{n\ep}{2}} + \pi(\T_0)\int_{\mathcal{R}_2} e^{-\frac{1}{2}t'\mathcal{J}_\G t}dt =o(1),
\end{eqnarray*}
which together with (\ref{conv.R1}) yields (\ref{main}). This completes the proof.
\hfill{$\Box$}\vspace{0.2cm}\\

\noindent{\bf Proof of Theorem \ref{thm3}}\\
Following the same steps as in the proof of Theorem \ref{thm2}, and using the finiteness of $\int \|\T\| \pi(\T)d\T$, we can obtain that
\begin{equation*}
     \int_{\mathbb{R}^d} \|t\|\Big| \pi_\G(t\mid\bX) - \frac{|\mathcal{J}_\G|^{1/2}}{(2\pi)^{d/2}} e^{- \frac{1}{2} t' \mathcal{J}_\G t} \Big| dt =o(1)\quad a.s.,
\end{equation*}
which implies that 
\begin{equation*}
     \int_{\mathbb{R}^d} t \pi_\G(t\mid\bX)dt \stackrel{a.s.}{\longrightarrow}  \int_{\mathbb{R}^d} t \frac{|\mathcal{J}_\G|^{1/2}}{(2\pi)^{d/2}} e^{- \frac{1}{2} t' \mathcal{J}_\G t}dt=0.
\end{equation*}
Therefore, since $\hat\T_{\G,n}^{EDPE}=\E(\hat\T_{\G,n} + t/\sqrt{n} \mid \bX)$, we have 
\[\sqrt{n}(\hat\T_{\G,n}^{EDPE} - \hat\T_{\G,n})= \int_{\mathbb{R}^d} t \pi_\G(t \mid\bX)dt\ \longrightarrow\ 0\quad a.s.\]
The second result follows directly from Theorem \ref{thm1} and Slutsky’s theorem. This completes the proof.
\hfill{$\Box$}\vspace{0.2cm}\\

\begin{lemma}\label{lem:GARCH_C3}
For the objective function $\widetilde Q_n^{(\G)}(\T)$ given in \eqref{obj_GARCH}, there exists a positive random variable $V_n$ independent of $\theta$ such that, for all sufficiently small $\gamma>0$,
\[
\sup_{\theta \in \Theta}
\left|
\widetilde Q_n^{(\gamma)}(\theta)- K_n(\gamma)-
\sum_{t=1}^n \log \tilde f_\theta(X_t \mid \mathcal F_{t-1})
\right|
\le V_n \gamma,
\]
where $K_n(\gamma) = n(\gamma^{-1} - 1)$.
\end{lemma}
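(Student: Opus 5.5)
The plan is to expand $\tilde q_t^{(\gamma)}(\theta)$ term by term in $\gamma$ and control the remainder uniformly in $\theta$ through Taylor bounds for the exponential. Write $\ell_t(\theta):=\log\tilde f_\theta(X_t\mid\mathcal F_{t-1}) = -\log(\sqrt{2\pi}\,\tsig_t(\theta)) - X_t^2/(2\tsig_t^2(\theta))$. By \eqref{obj_GARCH},
\[
\tilde q_t^{(\gamma)}(\theta)=\frac{1}{\gamma}e^{\gamma \ell_t(\theta)} - \Big(\frac{1}{1+\gamma}\Big)^{3/2} e^{-\gamma\log(\sqrt{2\pi}\tsig_t(\theta))}.
\]
Subtracting $K_n(\gamma)/n=\gamma^{-1}-1$ and $\ell_t(\theta)$ gives
\[
\tilde q_t^{(\gamma)}(\theta)-(\gamma^{-1}-1)-\ell_t(\theta)
=\frac{e^{\gamma\ell_t}-1-\gamma\ell_t}{\gamma}
+\Big(1-\Big(\tfrac{1}{1+\gamma}\Big)^{3/2}\Big)
-\Big(\tfrac{1}{1+\gamma}\Big)^{3/2}\big(e^{-\gamma m_t}-1\big),
\]
where $m_t(\theta):=\log(\sqrt{2\pi}\tsig_t(\theta))$. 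It then suffices to bound each of the three pieces by $\gamma$ times a $\theta$-free quantity and sum over $t=1,\dots,n$.

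The key input is a uniform bound on $|\ell_t(\theta)|$ and $|m_t(\theta)|$ over the compact $\Theta$. Since $\Theta\subset(0,\infty)\times[0,\infty)^{p+q}$ is compact, $\omega\ge\underline\omega>0$, so $\tsig_t^2(\theta)\ge\underline\omega$ for every $t$ and $\theta$. Also, by the recursion \eqref{tsig} with fixed initial values, $\tsig_t^2(\theta)$ is continuous on compact $\Theta$. It is therefore bounded above by some $M_t<\infty$ depending only on the data $X_1,\dots,X_n$ and the initial values; explicitly, one can iterate the recursion with the maximal coefficients. Hence
\[
B_t:=\sup_{\theta\in\Theta}\big(|\ell_t(\theta)|+|m_t(\theta)|\big)\le \big|\log\sqrt{2\pi\underline\omega}\big|+\big|\log\sqrt{2\pi M_t}\big|+\frac{X_t^2}{2\underline\omega}<\infty,
\]
and $B_t$ is free of $\theta$ and $\gamma$.

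Now apply the elementary bounds $|e^x-1-x|\le \tfrac{x^2}{2}e^{|x|}$ and $|e^x-1|\le |x|e^{|x|}$, valid for all real $x$. For $\gamma\le 1$, the first piece is at most $\tfrac{\gamma}{2}B_t^2e^{B_t}$ and the third is at most $\gamma B_te^{B_t}$. For the second piece, $0\le 1-(1+\gamma)^{-3/2}\le \tfrac32\gamma$ by the mean value theorem. Summing over $t$, one may take
\[
V_n:=\sum_{t=1}^n\Big(\tfrac12B_t^2e^{B_t}+B_te^{B_t}+\tfrac32\Big),
\]
which is positive, finite and $\theta$-free, and the bound holds for all $\gamma\in(0,1]$.

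The only genuinely nonroutine point is the uniform upper bound on $\tsig_t^2(\theta)$, needed to control $|\log\tsig_t|$. It is handled by continuity on the compact set, or directly by $\tsig_t^2\le \bar\omega\sum_{k<t}\bar\beta^k+\dots$ with data-dependent terms. Because $V_n$ is allowed to be random and $n$ is fixed, no moment conditions are needed.
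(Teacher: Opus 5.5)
Your proof is correct and follows the same route as the paper: a Taylor expansion in $\gamma$ whose remainders are uniform in $\theta$ because, for fixed data, $\underline\omega\le\tsig_t^2(\theta)\le M_t$ on the compact $\Theta$. Absorbing $(\sqrt{2\pi}\,\tsig_t(\theta))^{-\gamma}$ into $e^{\gamma\ell_t(\theta)}$ and using Lagrange remainders makes the paper's $O(\gamma^2)$ bookkeeping explicit and gives a concrete $V_n$ valid for all $\gamma\in(0,1]$. One harmless slip: since $|\ell_t|\le|m_t|+X_t^2/(2\underline\omega)$, your displayed bound on $B_t$ needs a factor $2$ in front of each of the two logarithmic terms; this changes nothing else.
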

\begin{proof}
 Let $w_t(\theta) := (2\pi \tilde{\sigma}_t^2(\theta))^{-1/2}$ and $B_t(\theta) := X_t^2 / (2\tilde{\sigma}_t^2(\theta))$. The DPD-based objective function is given by
\begin{equation*}
    \widetilde{Q}_n^{(\gamma)}(\theta) = \sum_{t=1}^n w^\G_t(\theta) \left\{ \frac{1}{\gamma} \exp(-\gamma B_t(\theta)) - \left( \frac{1}{1+\gamma} \right)^{\frac 3 2} \right\}.
\end{equation*}
For a sufficiently small $\gamma > 0$, we can expand the terms inside the summation as follows:
\begin{align*}
    &w^\G_t(\theta) = 1 + \gamma \log w_t(\theta) + \frac{\gamma^2}{2}(\log w_t(\theta))^2 + O(\gamma^3),\\
    &\frac{1}{\gamma} \exp(-\gamma B_t(\theta)) = \frac{1}{\gamma} - B_t(\theta) + \frac{\gamma}{2}B^2_t(\theta) + O(\gamma^2),\\
    &(1+\gamma)^{-3/2} = 1 - \frac{3}{2}\gamma + O(\gamma^2).
\end{align*}  
Recall that the parameter space $\Theta$ is a compact subset of $(0,\infty)\times[0,\infty)^{p+q}$. Since $\tilde{\sigma}_t^2(\theta)$ is continuous in $\T$, we can see that $\inf_{\theta \in \Theta} \tilde{\sigma}_t^2(\theta) > 0$ and $\sup_{\theta \in \Theta} \tilde{\sigma}_t^2(\theta) < \infty$ for any fixed sample $\{X_t\}_{t=1}^n$. Consequently, $B_t(\theta)$ and $\log w_t(\theta)$ are uniformly bounded on $\Theta$, ensuring that the remainder terms in the expansions are also uniformly bounded in $\theta \in \Theta$.

Substituting these expansions into the summand $\tilde q_t^{(\gamma)}(\theta)$, we have
\begin{align*}
    \tilde q_t^{(\gamma)}(\theta) &= \left( 1 + \gamma \log w_t(\theta) + \frac{\gamma^2}{2}(\log w_t(\theta))^2 + O(\gamma^3) \right) \left\{ \left( \frac{1}{\gamma} - 1 \right) - B_t(\theta) + \gamma \left( \frac{B_t^2(\theta) + 3}{2} \right) + O(\gamma^2) \right\} \nonumber \\
    &= \left( \frac{1}{\gamma} - 1 \right) + (\log w_t(\theta) - B_t(\theta)) + \gamma R_t(\theta) + O(\gamma^2),
\end{align*}
where $R_t(\theta) := \frac{1}{2}(B_t^2(\theta) + 3) - (1+B_t(\theta))\log w_t(\theta) + \frac{1}{2}(\log w_t(\theta))^2$. Summing over $t=1, \dots, n$ yields
\begin{equation*}
    \widetilde{Q}_n^{(\gamma)}(\theta) = K_n(\gamma) + \widetilde{Q}_n^{(0)}(\theta) + \gamma \sum_{t=1}^n R_t(\theta) + O(\gamma^2),
\end{equation*}
Since $R_t(\theta)$ is continuous on the compact set $\Theta$, $\sup_{\theta \in \Theta} |\sum_{t=1}^n R_t(\theta)| < \infty$. Thus, we can take a positive random variable $V_n$, depending only on the sample $\{X_t\}_{t=1}^n$, such that for sufficiently small $\gamma$,
\begin{equation*}
    \sup_{\theta \in \Theta} \left| \tilde{Q}_n^{(\gamma)}(\theta) - K_n(\gamma) - \tilde{Q}_n^{(0)}(\theta) \right| \le \gamma \left( \sup_{\theta \in \Theta} \left| \sum_{t=1}^n R_t(\theta) \right| + O(\gamma) \right) \le V_n \gamma,
\end{equation*}
which completes the proof.
 \end{proof}

\bibliography{reference.bib}

@article{ling2010general,
  title={A general asymptotic theory for time-series models},
  author={Ling, Shiqing and McAleer, Michael},
  journal={Statistica Neerlandica},
  volume={64},
  number={1},
  pages={97--111},
  year={2010},
  publisher={Wiley Online Library}
}

@article{song2021test,
  title={Test for parameter change in the presence of outliers: the density power divergence-based approach},
  author={Song, Junmo and Kang, Jiwon},
  journal={Journal of Statistical Computation and Simulation},
  volume={91},
  number={5},
  pages={1016--1039},
  year={2021},
  publisher={Taylor \& Francis}
}

@article{warwick2005choosing,
  title={Choosing a robustness tuning parameter},
  author={Warwick, Jane and Jones, MC},
  journal={Journal of Statistical Computation and Simulation},
  volume={75},
  number={7},
  pages={581--588},
  year={2005},
  publisher={Taylor \& Francis}
}

@article{ruli2020robust,
  author  = {Ruli, Edoardo and Sartori, Nicola and Ventura, Laura},
  title   = {Robust Approximate Bayesian Inference},
  journal = {Journal of Statistical Planning and Inference},
  volume  = {202},
  pages   = {1--16},
  year    = {2020},
  doi     = {10.1016/j.jspi.2019.06.002}
}

@article{ohagan2012conflict,
  author  = {O'Hagan, Anthony and Pericchi, Luis},
  title   = {Bayesian Statistics and the Problem of Prior--Data Conflict},
  journal = {Statistical Science},
  volume  = {27},
  number  = {1},
  pages   = {1--31},
  year    = {2012},
  doi     = {10.1214/11-STS362}
}

@article{andrade2011bayesian,
  author  = {Andrade, Jos{\'e} Ailton Alencar and O'Hagan, Anthony},
  title   = {Bayesian Robustness Modelling of Location and Scale Parameters},
  journal = {Scandinavian Journal of Statistics},
  volume  = {38},
  number  = {4},
  pages   = {691--711},
  year    = {2011},
  doi     = {10.1111/j.1467-9469.2011.00750.x}
}

@article{song2017robusta,
  author  = {Song, Junmo},
  title   = {Robust Estimation of Dispersion Parameter in Discretely Observed Diffusion Processes},
  journal = {Statistica Sinica},
  volume  = {27},
  number  = {1},
  pages   = {373--388},
  year    = {2017},
  doi     = {10.5705/ss.2014.162}
}

@article{ghosh2013robust,
  author  = {Ghosh, Abhik and Basu, Ayanendranath},
  title   = {Robust Estimation for Independent Non-Homogeneous Observations Using Density Power Divergence with Applications to Linear Regression},
  journal = {Electronic Journal of Statistics},
  volume  = {7},
  pages   = {2420--2456},
  year    = {2013},
  doi     = {10.1214/13-EJS847}
}

@article{loh2024robust,
  author  = {Loh, Po-Ling},
  title   = {A Theoretical Review of Modern Robust Statistics},
  journal = {Annual Review of Statistics and Its Application},
  volume  = {11},
  pages   = {1--28},
  year    = {2024},
  doi     = {10.1146/annurev-statistics-040720-012424}
}

@book{huber1981robust,
  author    = {Huber, Peter J.},
  title     = {Robust Statistics},
  publisher = {Wiley},
  address   = {New York},
  year      = {1981}
}

@article{ronchetti2021robust,
  author  = {Ronchetti, Elvezio},
  title   = {The Main Contributions of Robust Statistics to Statistical Science and a New Challenge},
  journal = {METRON},
  volume  = {79},
  number  = {2},
  pages   = {127--135},
  year    = {2021},
  doi     = {10.1007/s40300-021-00204-6}
}

@article{straumann2006quasi,
  title={Quasi-maximum-likelihood estimation in conditionally heteroscedastic time series: A stochastic recurrence equations approach},
  author={Straumann, Daniel and Mikosch, Thomas},
  year={2006}
}

@article{hamadeh2011asymptotic,
  title={Asymptotic properties of LS and QML estimators for a class of nonlinear GARCH processes},
  author={Hamadeh, Tawfik and Zako{\"\i}an, Jean-Michel},
  journal={Journal of Statistical Planning and Inference},
  volume={141},
  number={1},
  pages={488--507},
  year={2011},
  publisher={Elsevier}
}

@article{francq:zakoian:2004,
	author = {Francq, C. and Zako\"{i}an, J.},
	journal = {Bernoulli},
	pages = {605-637},
	title = {Maximum likelihood estimation of pure {GARCH} and {ARMA-GARCH} processes},
	volume = {10},
	year = {2004}}

@article{basu1998,
  title={{Robust and efficient estimation by minimising a density power divergence}},
  author={Basu, Ayanendranath and Harris, Ian R and Hjort, Nils L and Jones, MC},
  journal={Biometrika},
  volume={85},
  number={3},
  pages={549--559},
  year={1998},
  publisher={Oxford University Press}
}

@article{ghosh2016robust,
  title={{Robust Bayes estimation using the density power divergence}},
  author={Ghosh, Abhik and Basu, Ayanendranath},
  journal={Annals of the Institute of Statistical Mathematics},
  volume={68},
  number={2},
  pages={413--437},
  year={2016},
  publisher={Springer}
}

@article{hooker2014bayesian,
  title={Bayesian model robustness via disparities},
  author={Hooker, Giles and Vidyashankar, Anand N},
  journal={Test},
  volume={23},
  number={3},
  pages={556--584},
  year={2014},
  publisher={Springer}
}

@article{fujisawa2008robust,
  title={Robust parameter estimation with a small bias against heavy contamination},
  author={Fujisawa, Hironori and Eguchi, Shinto},
  journal={Journal of Multivariate Analysis},
  volume={99},
  number={9},
  pages={2053--2081},
  year={2008},
  publisher={Elsevier}
}

@article{nakagawa2020robust,
  title={Robust Bayesian inference via $\gamma$-divergence},
  author={Nakagawa, Tomoyuki and Hashimoto, Shintaro},
  journal={Communications in Statistics-Theory and Methods},
  volume={49},
  number={2},
  pages={343--360},
  year={2020},
  publisher={Taylor \& Francis}
}

@article{lee2009minimum,
  title={Minimum density power divergence estimator for {GARCH} models},
  author={Lee, Sangyeol and Song, Junmo},
  journal={Test},
  volume={18},
  pages={316--341},
  year={2009},
  publisher={Springer}
}

@article{song2021sequential,
  title={Sequential change point test in the presence of outliers: the density power divergence based approach},
  author={Song, Junmo},
  journal={Electronic Journal of Statistics},
  volume={15},
  number={1},
  pages={3504--3550},
  year={2021},
  publisher={The Institute of Mathematical Statistics and the Bernoulli Society}
}

@article{hoffman2014no,
  title={The No-U-Turn sampler: adaptively setting path lengths in Hamiltonian Monte Carlo.},
  author={Hoffman, Matthew D and Gelman, Andrew and others},
  journal={J. Mach. Learn. Res.},
  volume={15},
  number={1},
  pages={1593--1623},
  year={2014}
}

@article{bai2014efficient,
  title={Efficient pairwise composite likelihood estimation for spatial-clustered data},
  author={Bai, Yun and Kang, Jian and Song, Peter X-K},
  journal={Biometrics},
  volume={70},
  number={3},
  pages={661--670},
  year={2014},
  publisher={Oxford University Press}
}

\clearpage

\end{document}